\newcommand{\set}[1]{\left\{#1\right\}}
\newcommand{\Real}{\mathbb{R}}
\renewcommand{\a}{\alpha}
\renewcommand{\b}{\beta}
\renewcommand{\t}{\theta}
\renewcommand{\v}{\varphi}
\renewcommand{\P}{\mathcal{P}}
\newcommand{\J}{\mathcal{J}}
\newcommand{\K}{\mathcal{K}}
\renewcommand{\S}{\mathbb{S}}
\newcommand{\CP}{P_l(\mathbb{C})}
\newcommand{\HP}{P_l(\mathbb{H})}
\newcommand{\Ca}{P_2(\mathbb{C}\mathbbm{a}\mathbbm{y})}
\newcommand{\B}{\mathbb{B}}
\newcommand{\dive}{\operatorname{div}}
\newtheorem{thm}{Theorem}[section]
\newtheorem{lem}[thm]{Lemma}
\theoremstyle{definition}
\numberwithin{equation}{section}
\author[\'O. Ciaurri, L. Roncal, and P. R. Stinga]{\'Oscar Ciaurri \and Luz Roncal \and Pablo Ra\'ul Stinga}
\address{Departamento de Matem\'aticas y Computaci\'on\\
         Universidad de La Rioja\\
         26004 Logro\~no, Spain}
\email{oscar.ciaurri@unirioja.es, luz.roncal@unirioja.es}
\address{Department of Mathematics\\
	The University of Texas at Austin\\
	1 University Station C1200\\
	78712-1202 Austin, TX\\
	United States of America}
\email{stinga@math.utexas.edu}
\thanks{Research partially supported by grants MTM2012-36732-C03-02 and MTM2011-28149-C02-01 from Spanish Government}
\keywords{Analysis on compact Riemannian symmetric spaces of rank one,  Laplace--Beltrami operator, Riesz transform, mixed norm spaces, Rubio de Francia
extrapolation theorem, Jacobi expansions}
\subjclass[2010]{Primary: 43A85, 53C35, 58J05. Secondary: 33C45, 42C10}
\begin{document}

\title[Riesz transforms on Riemannian symmetric spaces]{Riesz transforms on compact \\ Riemannian symmetric spaces of rank one}

\begin{abstract}
In this paper we prove mixed norm estimates for Riesz transforms related to Laplace--Beltrami operators on compact Riemannian symmetric spaces of rank one. These operators are closely related to the Riesz transforms
for Jacobi polynomials expansions. The key point is to obtain
sharp estimates for the kernel of the Jacobi--Riesz transforms with uniform control on
the parameters, together with an adaptation of Rubio de Francia's
extrapolation theorem. The latter results are of independent interest.
\end{abstract}

\maketitle

\section{Introduction}

Let $M$ be a Riemannian symmetric space of rank
one\footnote{These are the ``model'' spaces of Riemannian geometry, cf. Isaac Chavel,
\textit{Riemannian
Symmetric Spaces of Rank One},
Lecture Notes in Pure and Applied Mathematics, Vol. 5.
Marcel Dekker, Inc., New York, 1972.} and compact type.
The Laplace--Beltrami operator $\tilde{\Delta}_M$ on $M$ is given by $\tilde{\Delta}_M=\dive_M\nabla_M$,
where $\nabla_M$ and $\dive_M$ are the Riemannian gradient and divergence, respectively.
Suppose now that $u$ is a solution to the nonlocal problem
$$(-\Delta_M)^{1/2}u=f,\quad\hbox{in}~M,$$
where $(-\Delta_M)^{1/2}$ is the square root of $-\Delta_M=-\tilde{\Delta}_M+\lambda_M$,
and $\lambda_M$ is a positive constant depending on $M$ (see \eqref{lambdas} below).
The operator $(-\Delta_M)^{1/2}$ can be thought as a \textit{first order} (nonlocal) differential operator.
Thus, when $f$ is in some functional space $X$ we may expect
 the gradient of $u$ to be in the same space $X$,
that is,
$$|\nabla_M u|=|\nabla_M(-\Delta_M)^{-1/2}f|\in X.$$
The operator
$$R_Mf:=|\nabla_M(-\Delta_M)^{-1/2}f|,$$
is the Riesz transform on $M$. The consideration above
 would say that the Riesz transform is a bounded operator from $X$ to $X$.
 In this paper we choose as $X$ the mixed norm spaces $L^p(L^2(M))$
 that are defined in terms of polar coordinates on $M$.
Here is our main result.

\begin{thm}\label{Thm:Riesz Riemannian}
Let $M$ be a compact Riemannian symmetric space of rank one. Then the Riesz transform $R_M$
is a bounded operator from $L^p(L^2(M))$ into itself, for all $1<p<\infty$.
\end{thm}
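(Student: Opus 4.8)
The plan is to use geodesic polar coordinates on $M$ to transfer the problem to the interval $(0,\pi)$, to reduce it there to parameter–uniform weighted estimates for Riesz transforms attached to one–dimensional Jacobi differential operators, and then to close the argument by an extrapolation theorem.

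\emph{Reduction to a family of Jacobi--Riesz transforms.} Every compact rank one symmetric space $M$ carries polar coordinates $(\t,\omega)$, with $\t\in(0,\pi)$ after normalisation and $\omega$ in a sphere $\S$ (a pair of nested spheres in the projective cases), such that $dV_M=d\mu_{\a_M,\b_M}(\t)\,d\s(\omega)$, where $d\mu_{\a,\b}(\t)=(\sin\tfrac\t2)^{2\a+1}(\cos\tfrac\t2)^{2\b+1}\,d\t$ and $\a_M,\b_M$ are fixed by $M$, and such that $-\Delta_M=-\tilde\Delta_M+\lambda_M$ separates into a radial operator plus $a(\t)^{-2}(-\Delta_\S)$, while $\abs{\nabla_M u}$ is comparable to the sum of $\abs{\partial_\t u}$ and the $a(\t)^{-1}$–weighted angular gradient $\abs{\nabla_\S u}$ (with the obvious modification in the projective cases). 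Expanding $F\in L^p(L^2(M))$ in an orthonormal basis $\{\Phi_j\}$ of eigenfunctions of $-\Delta_\S$ with eigenvalues $\nu_j\uparrow\infty$ and using Parseval on $\S$, one has
$$\norm{F}_{L^p(L^2(M))}^p=\int_0^\pi\Big(\sum_j\abs{F_j(\t)}^2\Big)^{p/2}\,d\mu_{\a_M,\b_M}(\t);$$
moreover $\partial_\t$ and $\nabla_\S$ commute with this angular decomposition, so writing $u=(-\Delta_M)^{-1/2}f$ one finds that $R_M$ is dominated, up to constants, by the two operators whose $j$–th angular components are $\partial_\t\L_j^{-1/2}f_j$ and $\sqrt{\nu_j}\,a(\t)^{-1}\L_j^{-1/2}f_j$, where $\L_j$ is the restriction of $-\Delta_M$ to the $j$–th sector. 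Each $\L_j$ is, after an explicit change of density, a Jacobi operator with parameters $(\a_j,\b_j)$ ranging over an $M$–dependent set in $[\a_M,\infty)\times[\b_M,\infty)$ with $\a_j+\b_j\to\infty$, so the two operators above are first–order Riesz transforms $R^{(i)}_{j}$, $i=1,2$, built from the Jacobi--Riesz transforms for the parameters $(\a_j,\b_j)$. Thus it suffices to establish the $\ell^2$–valued mixed–norm inequality
$$\Big\|\Big(\sum_j\big|R^{(i)}_{j}f_j\big|^2\Big)^{1/2}\Big\|_{L^p(d\mu_{\a_M,\b_M})}\lesssim\Big\|\Big(\sum_j\abs{f_j}^2\Big)^{1/2}\Big\|_{L^p(d\mu_{\a_M,\b_M})},\qquad i=1,2,\ 1<p<\infty,$$
with a constant independent of the family $\{(\a_j,\b_j)\}$; note that the spectral shift $\lambda_M$ gives a positive lower bound for $-\Delta_M$, so all the negative powers above are well defined.

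\emph{Parameter–uniform Calder\'on--Zygmund estimates.} The heart of the matter, and the step I expect to be the main obstacle, is to prove \emph{sharp kernel bounds for the Jacobi--Riesz transforms, uniform in the parameters $(\a,\b)$}. I would use the subordination formula $\J_{\a,\b}^{-1/2}=c\int_0^\infty e^{-t\J_{\a,\b}}\,\tfrac{dt}{\sqrt t}$ to write $R^{(i)}_{\a,\b}$ as an integral operator against a kernel of the form $c\int_0^\infty D^{(i)}_{\a,\b}\big(e^{-t\J_{\a,\b}}(\t,\v)\big)\,\tfrac{dt}{\sqrt t}$, where $D^{(i)}_{\a,\b}$ is the relevant first–order operator (whose $\cot\tfrac\t2$ and $\tan\tfrac\t2$ coefficients grow with $\a,\b$), and then prove, on the space of homogeneous type $\big((0,\pi),d\mu_{\a,\b},\abs{\cdot}\big)$, the standard size estimate $\big|K^{(i)}_{\a,\b}(\t,\v)\big|\lesssim\mu_{\a,\b}\big(B(\t,\abs{\t-\v})\big)^{-1}$ and the H\"ormander regularity estimate, both with constants \emph{not depending on $(\a,\b)$}. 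This demands sharp, parameter–uniform bounds for the Jacobi heat kernel and its first derivatives (obtained, for instance, from the product formula for Jacobi polynomials or from a direct analysis of the semigroup), a careful control of every constant as $\a,\b\to\infty$, and the bookkeeping of the change–of–density factors relating the operators $\L_j$ to the model Jacobi operators. Once these estimates are in place, Calder\'on--Zygmund theory on spaces of homogeneous type yields that every $R^{(i)}_{\a,\b}$ is of weak type $(1,1)$ and bounded on $L^q(w\,d\mu_{\a,\b})$ for all $1<q<\infty$ and all $w\in A_q(d\mu_{\a,\b})$, with a norm depending only on $q$ and $[w]_{A_q}$ — in particular uniformly in $(\a,\b)$.

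\emph{Extrapolation and conclusion.} Finally, I would apply an adaptation of Rubio de Francia's extrapolation theorem tailored to this setting, in which both the operator and the underlying doubling measure depend on the index $j$: starting from the parameter–uniform weighted $L^q$ bounds of the previous step, it delivers the $\ell^2$–valued mixed–norm inequality displayed above for every $1<p<\infty$. Together with the reduction carried out in the first step, this proves that $R_M$ is bounded on $L^p(L^2(M))$ for all $1<p<\infty$, as asserted.
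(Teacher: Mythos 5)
Your overall architecture coincides with the paper's: polar coordinates and spherical harmonics reduce $R_M$ to the radial Jacobi--Riesz transforms $\partial_\t(\J^{\a_j,\b_j})^{-1/2}$ and the auxiliary operators $\sqrt{\nu_j}\,\rho_M(\t)^{1/2}(\J^{\a_j,\b_j})^{-1/2}$ with parameters shifted by the angular degree $j$; one then proves parameter--uniform Calder\'on--Zygmund kernel estimates and concludes with a Rubio de Francia extrapolation adapted to $((0,\pi),d\mu_{\a,\b},|\cdot|)$. The choice of heat rather than Poisson subordination is immaterial.

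There is, however, one genuine gap, and it sits exactly at the hinge of the argument. You propose to prove the size and H\"ormander estimates for the $j$-th sector operator on the space of homogeneous type $((0,\pi),d\mu_{\a_j,\b_j},|\cdot|)$, i.e.\ with respect to the \emph{$j$-dependent} measure, and then to invoke an extrapolation theorem ``tailored to this setting, in which both the operator and the underlying doubling measure depend on the index $j$.'' No such extrapolation is available, and the step would fail: the Rubio de Francia construction builds auxiliary $A_1$ weights from the maximal function of a \emph{single} fixed measure, and the summation over $j$ that produces the $\ell^2$-valued inequality requires one weight lying simultaneously (and with uniformly controlled constant) in all the classes $A_r(d\mu_{\a_j,\b_j})$; since $d\mu_{\a_j,\b_j}/d\mu_{\a_M,\b_M}=(\sin\tfrac\t2)^{2aj}(\cos\tfrac\t2)^{2bj}$, these classes drift apart as $j\to\infty$ and even the weights generated from $w=1$ leave them. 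The missing idea is the normalization that makes everything live on \emph{one} space of homogeneous type: conjugate the $j$-th operator by the density $u_j(\t)=(\sin\tfrac\t2)^{aj}(\cos\tfrac\t2)^{bj}$ (so that $u_j^2\,d\mu_{\a_M,\b_M}=d\mu_{\a_j,\b_j}$), and prove that the conjugated kernels $u_j(\t)u_j(\v)\K^{\a_j,\b_j}(\t,\v)$ satisfy
$$|u_j(\t)u_j(\v)\K^{\a_j,\b_j}(\t,\v)|\le \frac{C}{\mu_{\a_M,\b_M}(B(\t,|\t-\v|))},$$
together with the matching gradient bound, with $C$ independent of $j$ --- estimates against the \emph{fixed} base measure, not the natural one for the parameters $(\a_j,\b_j)$. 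This is precisely the content of the paper's Theorems \ref{Thm:Jacobi-Riesz kernel} and \ref{Angular Riesz kernel}, and it is where essentially all of the technical work (the uniform control of Gamma-factor constants as $\a_j+\b_j\to\infty$) is concentrated; your phrase about ``bookkeeping of the change-of-density factors'' gestures at this but does not supply it, and without it the extrapolation step cannot be closed.
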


It is known that the Riesz transforms based on the Laplace--Beltrami operator
$\nabla_M(-\tilde{\Delta}_M)^{-1/2}$ are bounded in $L^p(M)$, $1<p<\infty$, even if $M$ is of
non-compact type, see \cite[Section~6]{Strichartz}. Nothing had been done so far
for singular integrals in the mixed norm context.
In this regard, Theorem \ref{Thm:Riesz Riemannian} is completely original.
Moreover, the ideas and techniques developed here open the way to the study of many important singular
integral operators related to the Laplace--Beltrami operator in mixed norm spaces, like
maximal operators, higher order Riesz transforms and Littlewood--Paley square functions.
Applications to Sobolev spaces on $M$ and further development of the theory will appear elsewhere.

H.-C. Wang showed in \cite{Wang} that there are just five examples of $M$: the sphere, the real, complex and
quaternionic projective spaces and the Cayley plane.
Each of these manifolds admit appropriate polar coordinates
$(\t,x')\in (0,\pi)\times\S_M$, where $\S_M$ is a unit sphere whose dimension
depends on $M$.
This allows us to define the mixed norm spaces $L^p(L^2(M))$, see \cite{Ciaurri-Roncal-Stinga}.
It turns out that in these coordinates,
$$-\Delta_M=\mathcal{J}^{\a,\b}-\rho_M(\t)\tilde{\Delta}_{\S_M},$$
where, for given parameters $\a,\b>-1$ that depend on $M$, $\mathcal{J}^{\a,\b}$
is the trigonometric Jacobi polynomials differential operator (see \eqref{eq:ecDiferencial Jacobi})
and $\rho_M(\t)$ is an explicit nonnegative function of $\t$.
Thus the eigenspaces $\mathcal{H}_n(M)$
 of $-\Delta_M$ can eventually be written in terms of products of spherical harmonics
 and trigonometric Jacobi polynomials whose type parameters depend
 on $j$, where $j$ varies in a range depending on $n$ and the dimension of $M$.
 Moreover, because of such an underlying structure, the Riesz transform on $M$ in the spaces $L^p(L^2(M))$
 is related to the Jacobi--Riesz transform
 $$\mathcal{R}^{\a,\b}:=\partial_\t(\J^{\a,\b})^{-1/2},$$
 and an operator given by a negative power of the Jacobi operator
 $\mathcal{T}_M^{\a,\b}:=\sqrt{\rho_M(\t)}(\J^{\a,\b})^{-1/2}$,
 see Section \ref{Section:Riemannian} for more details.
Now the key point of the paper is to exploit these connections.
Roughly speaking, because of the structure of the operators and the estimates we are looking for\footnote{See
Section \ref{Section:Riemannian} for the detailed explanation.}, we need
suitable weighted vector-valued extensions for the operators $\mathcal{R}^{\a,\b}$ and $\mathcal{T}_M^{\a,\b}$.
These are contained in Theorems \ref{Cor:Lp} and \ref{Thm:Lp angular}.
We achieve both extensions with a clever use of a suitable adaptation of the extrapolation
theorem by Rubio de Francia, see Theorem \ref{th:extra-RF}.
In order to apply such an extrapolation result, we need to prove that
$\mathcal{R}^{\a,\b}$ and $\mathcal{T}_M^{\a,\b}$ are Calder\'on--Zygmund operators
in the sense of suitable spaces of homogeneous type, and show appropriate estimates for the corresponding kernels, see Theorems \ref{Thm:Jacobi-Riesz kernel}
and \ref{Angular Riesz kernel}.
In this way, by the general theory, they will satisfy
weighted inequalities. But we have to be careful in the estimates for the kernels,
since they need to be uniform on the parameters $\a,\b$.
These estimates are sharper than previous ones
already obtained in the Jacobi context (see \cite{Ciaurri-Roncal-Stinga, Nowak-Sjogren Calderon})
and of independent interest.

The paper is organized as follows.
In Section \ref{Section:Riemannian} we present the proof of Theorem \ref{Thm:Riesz Riemannian}
under the assumption that Theorems \ref{Cor:Lp} and \ref{Thm:Lp angular} are true.
Then the rest of the paper is devoted to complete the proof of the vector-valued extensions.
Section \ref{proofs vector-valued}
contains the adaptation of Rubio de Francia's
extrapolation theorem and also shows how to use it to derive Theorems \ref{Cor:Lp}
and \ref{Thm:Lp angular}.
In Section \ref{Section:Kernels} we prove the
kernel estimates for $\mathcal{R}^{\a,\b}$ and $\mathcal{T}_M^{\a,\b}$.
Since in this last section we must get uniform estimates on $\a$ and $\b$,
the computations become rather cumbersome,
so we will try to keep them to a minimum.

\section{The Riesz transform on $M$}\label{Section:Riemannian}

In this section we give the precise definitions of the mixed norm spaces $L^p(L^2(M))$
and the Riesz transform $R_M$.
We will also prove Theorem \ref{Thm:Riesz Riemannian} (case by case)
by using Theorems \ref{Cor:Lp} and \ref{Thm:Lp angular}.

As we said above, the manifolds $M$ we are considering here
are completely classified as
\begin{enumerate}
    \item the sphere $\S^d\subset\Real^{d+1}$, $d\geq1$;
    \item the real projective space $P_d(\Real)$, $d\geq2$;
    \item the complex projective space $\CP$, $l\geq2$;
    \item the quaternionic projective space $\HP$, $l\geq2$;
    \item the Cayley plane $\Ca$.
\end{enumerate}
We take (see \cite{Ciaurri-Roncal-Stinga, Sherman})
\begin{equation}\label{lambdas}
\lambda_{\S^d}=\lambda_{P_d(\Real)}=(\tfrac{d-1}{2})^2,\quad
\lambda_{\CP}=\lambda_{\HP}=\lambda_{\Ca}=(\tfrac{m+d}{2})^2.
\end{equation}
Here $d=2,4,8$, $m=l-2,2l-3,3$, for $\CP$, $\HP$ and $\Ca$, respectively.
Besides, we will assume that $d\ge2$ in the case of $\S^d$.

\subsection{Preliminaries on Jacobi expansions}

In this subsection we quickly introduce some necessary facts about Jacobi expansions and the definitions of the
related operators that will arise in the proof of Theorem \ref{Thm:Riesz Riemannian}.

The standard Jacobi polynomials of degree $n\geq0$ and type $\a,\b>-1$ are given by
\begin{equation}\label{eq1}
P_n^{(\a,\b)}(x)=(1-x)^{-\a}(1+x)^{-\b}\frac{(-1)^n}{2^nn!}\left(\frac{d}{dx}\right)^n\set{(1-x)^{n+\a}(1+x)^{n+\b}},\quad x\in(-1,1),
\end{equation}
see \cite[(4.3.1)]{Szego}. These form an orthogonal basis of $L^2((-1,1),(1-x)^\a(1+x)^\b dx)$. After making the change of variable $x=\cos\theta$, we obtain the normalized Jacobi trigonometric polynomials
\begin{equation}\label{trig pol}
\mathcal{P}_n^{(\a,\b)}(\theta)=d_n^{\a,\b}P_n^{(\a,\b)}(\cos\theta),
\end{equation}
where the normalizing factor is
\begin{equation}\label{que}
\begin{aligned}
    d_n^{\a,\b} &= 2^{\frac{\a+\b+1}{2}}\|P_n^{(\a,\b)}\|_{L^2((-1,1),(1-x)^\a(1+x)^\b dx)}^{-1} \\
     &= \left(\frac{(2n+\a+\b+1)\Gamma(n+1)\Gamma(n+\a+\b+1)}{\Gamma(n+\a+1)\Gamma(n+\b+1)}\right)^{1/2}.
\end{aligned}
\end{equation}
The trigonometric polynomials $\P_n^{(\a,\b)}$ are eigenfunctions of the Jacobi differential operator
\begin{equation}
\label{eq:ecDiferencial Jacobi}
\mathcal{J}^{\a,\b}=-\frac{d^2}{d\theta^2}-\frac{\a-\b+(\a+\b+1) \cos\theta}{\sin\theta}\frac{d}{d\theta}+\left(\frac{\a+\b+1}{2}\right)^2.
\end{equation}
Indeed, we have $\J^{\a,\b}\P_n^{(\a,\b)}=\lambda_n^{\a,\b}\P_n^{(\a,\b)}$, with eigenvalue $\lambda_n^{\a,\b}=\big(n+\tfrac{\a+\b+1}{2}\big)^2$.
Moreover, the system $\{\mathcal{P}_n^{(\a,\b)}\}_{n\ge0}$ forms a complete orthonormal basis of  $L^2(d\mu_{\a,\b}):=L^2((0,\pi),d\mu_{\a,\b}(\t))$,
with
$$d\mu_{\a,\b}(\theta)=(\sin\tfrac{\theta}{2})^{2\a+1}(\cos\tfrac{\theta}{2})^{2\b+1}d\theta.$$
For further information about Jacobi polynomials see \cite[Chapter IV]{Szego}.

\subsubsection{Jacobi--Riesz transforms}\label{definicion jacobiana}

We have the decomposition
$$\mathcal{J}^{\a,\b}=\delta^\ast\delta+\left(\frac{\a+\b+1}{2}\right)^2,$$
where $\delta=\frac{d}{d\t}$,
and $\delta^\ast$ is its formal adjoint in $L^2(d\mu_{\a,\b})$, that is,
$\delta^\ast=-\frac{d}{d\t}-(\a+1/2)\cot\tfrac{\t}{2}+(\b+1/2)\tan\tfrac{\t}{2}$.
The Jacobi--Riesz transform is formally defined as $\mathcal{R}^{\a,\b}:=\delta(\mathcal{J}^{\a,\b})^{-1/2}$.
For a function $f\in L^2(d\mu_{\a,\b})$ we can write $f=\sum_{n=0}^\infty c_n^{\a,\b}(f)\P_n^{(\a,\b)}=
\sum_{n=0}^\infty \langle f,\P_n^{(\a,\b)}\rangle_{L^2(d\mu_{\a,\b})}\P_n^{(\a,\b)}$ in
$L^2(d\mu_{\a,\b})$. Then
\begin{equation}
\label{definicion obvia}
\begin{aligned}
\mathcal{R}^{\a,\b}f(\theta) &= \sum_{n=0}^\infty\frac{1}{\left(n+\frac{\a+\b+1}{2}\right)}\,c_n^{\a,\b}(f)\delta\P_n^{(\a,\b)}(\theta)\\
&=-\frac{1}{2}\sum_{n=0}^\infty\frac{(n(n+\a+\b+1))^{1/2}}{n+\frac{\a+\b+1}{2}}
c_n^{\a,\b}(f)\sin\t\,\P_{n-1}^{(\a+1,\b+1)}(\theta).
\end{aligned}
\end{equation}

\subsubsection{The operator $\mathcal{T}_M^{\a,\b}$}\label{operador auxiliar}

Here we define $\mathcal{T}_M^{\a,\b}$ as in the introduction by taking
$$\rho_{\S^d}(\t)=\frac{1}{(\sin\t)^2},\quad
\rho_{P_d(\Real)}(\t)=\rho_{\CP}(\t)=\rho_{\HP}(\t)=\rho_{\Ca}(\t)=\frac{1}{(\sin\frac{\t}{2})^2}.$$
For $f\in L^2(d\mu_{\a,\b})$,
$$\mathcal{T}_M^{\a,\b}f(\t)=\sqrt{\rho_M(\t)}
\sum_{n=0}^\infty\frac{1}{n+\frac{\a+\b+1}{2}}\,c_n^{\a,\b}(f)\P_n^{(\a,\b)}(\theta).$$

\subsection{The case of the unit sphere $\S^d$}\label{subsection:redonda}

Let $\S^d=\set{x\in\Real^{d+1}:x_1^2+\cdots+x_{d+1}^2=1}$ be the unit sphere in $\Real^{d+1}$, $d\geq2$. We set
$-\Delta_{\S^d}=-\tilde{\Delta}_{\S^d}+\lambda_{\S^d}$, see \eqref{lambdas}.
It is well-known that $L^2(\S^d)=\bigoplus_{n=0}^\infty\mathcal{H}_n(\S^d)$, where $\mathcal{H}_n(\S^d)$ is the set of spherical harmonics of degree $n$ in $d+1$ variables, see \cite[Chapter~3,~Section~C.I]{BGM}. Each $\mathcal{H}_n(\S^d)$ is an eigenspace of $\tilde{\Delta}_{\S^d}$ with eigenvalue $-n(n+d-1)$.
Hence, $-\Delta_{\S^d}(\mathcal{H}_n(\S^d))=(n+\frac{d-1}{2})^2(\mathcal{H}_n(\S^d))$.

We introduce the following coordinates on $\S^d$,
known as \textit{geodesic polar coordinates}. Each point on the sphere can be written as
\begin{equation}\label{geodesicas}
\Phi(\theta,x')=(\cos\theta,x_1'\sin\theta,\ldots,x_d'\sin\theta)\in\S^d,
\end{equation}
for $\theta\in[0,\pi]$ and $x'\in\S^{d-1}$ (see \cite[pp.~69--70]{Chavel},
also \cite[p.~104]{Sherman} after the change $t=\cos\t$). For a function
$f$ on $\S^d$ let us write $F=f\circ\Phi$. We have, see \cite[pp.~56--57]{BGM},
$$\int_{\S^d}f(x)\,dx=\int_0^\pi\int_{\S^{d-1}}F(\theta,x')\,dx'\,(\sin\theta)^{d-1}\,d\theta,$$

By using the coordinates in \eqref{geodesicas}, we can see that
\begin{equation}\label{laplaciano polares}
-\Delta_{\S^d}=-\frac{\partial^2}{\partial\theta^2}-(d-1)\cot\theta\frac{\partial}{\partial\theta}+\lambda_{\S^d} -\frac{1}{\sin^2\theta}\tilde\Delta_{\S^{d-1}}=\J^{\a,\a}-\rho_{\S^d}(\t)\tilde\Delta_{\S^{d-1}},
\end{equation}
where $\tilde{\Delta}_{\S^{d-1}}$ is the spherical part of the Laplacian on $\Real^d$, acting on functions on $\Real^{d+1}$ by holding the first coordinate fixed and differentiating with respect to the remaining variables, see
 \cite[Chapter~II,~Section~5]{Chavel Eigenvalues}. In \eqref{laplaciano polares} we have chosen
  $\alpha=\beta=\frac{d-2}{2}\geq0$ for the operator $\J^{\a,\a}$ of
 \eqref{eq:ecDiferencial Jacobi}.

By using the description of $\mathcal{H}_n(\S^d)$ via spherical harmonics and the coordinates in \eqref{geodesicas}, it is an exercise to see that an orthonormal basis associated to \eqref{laplaciano polares} is given by
$$\varphi_{n,j,k}(x)=\psi_{n,j}(\theta)Y_{j,k}^d(x'),\quad x=\Phi(\theta,x'),~0<\theta<\pi,~x'\in\S^{d-1}.$$
See also \cite[Section~3,~p.~110]{Sherman}. Here, for $n\geq0$ and $j=0,1,\ldots,n$,
\begin{equation}\label{psi}
\psi_{n,j}(\theta)=a_{n,j}(\sin\theta)^jC_{n-j}^{j+\frac{d-1}{2}}(\cos\theta),
\end{equation}
where
\begin{equation*}
C_k^\lambda(x)=\frac{\Gamma(\lambda+1/2)\Gamma(k+2\lambda)}{\Gamma(2\lambda)
\Gamma(k+\lambda+1/2)}P_k^{(\lambda-1/2,\lambda-1/2)}(x),\quad\lambda>-1/2,~x\in(-1,1),
\end{equation*}
is an ultraspherical polynomial, see \cite[(4.7.1)]{Szego}, and $a_{n,j}$ is the normalizing constant
\begin{align*}
    a_{n,j} &= \|(\sin\theta)^jC_{n-j}^{j+\frac{d-1}{2}}(\cos\theta)\|_{L^2((0,\pi),(\sin\theta)^{d-1}\,d\theta)}^{-1} \\
     &= \frac{2^{-(j+\frac{d-1}{2})}\Gamma(2j+d-1)\Gamma(n+d/2)}{\Gamma(j+d/2)\Gamma(n+j+d-1)}\,d_{n-j}^{j+\frac{d-2}{2},j+\frac{d-2}{2}},
\end{align*}
with $d^{\a,\b}_n$ as in \eqref{que}. The functions $Y_{j,k}^d$, $k=1,\ldots,d(j)$, where $d(j)=(2j+d-2)\frac{(j+d-3)!}{j!(d-2)!}$, form an orthonormal basis of spherical harmonics on $\S^{d-1}$ of degree $j\geq0$. The orthogonal projections of $f$ onto the spaces $\mathcal{H}_n(\S^d)$ can be written in coordinates as
\begin{equation}
\label{proyecciones}
\operatorname{Proj}_{\mathcal{H}_n(\S^d)}(f)=\sum_{j=0}^n\sum_{k=1}^{d(j)}\langle f,\varphi_{n,j,k}\rangle_{L^2(\S^d)}\varphi_{n,j,k}.
\end{equation}

We define the mixed norm space $L^p(L^2(\S^d))$, $1\leq p<\infty$,
as the set of functions $f$ on $\S^d$ such that
$$\|f\|_{L^p(L^2(\S^d))}:=\Bigg(\int_0^\pi\Bigg(\int_{\S^{d-1}}|F(\theta,x')|^2 dx'\Bigg)^{p/2}(\sin\theta)^{d-1}\,d\theta\Bigg)^{1/p}<\infty.$$
The spaces $L^p(L^2(\S^d))$ are Banach spaces under this norm.

The Riesz transform on the mixed norm space $L^p(L^2(\S^d))$
are defined in the following way. Let
\begin{equation}\label{definicion integral fraccionaria}
(-\Delta_{\S^d})^{-1/2}f\equiv (-\Delta_{\S^d})^{-1/2}F=
\sum_{n=0}^\infty\frac{1}{n+\frac{d-1}{2}}\operatorname{Proj}_{\mathcal{H}_n(\S^d)}(f),
\end{equation}
(note the abuse of notation in \eqref{definicion integral fraccionaria}) for $f\in L^2(\S^d)$. Next, notice that
$-\Delta_{\S^d}=\delta_{\S^d}^*\delta_{\S^d}+\lambda_{\S^d}$,
with
$$\delta_{\S^d}=x'\frac{\partial }{\partial \t}+\sqrt{\rho_{\S^d}(\t)}\nabla_{\S^{d-1}},$$
and
$$
\delta^*_{\S^d}=-\left(\frac{\partial }{\partial \t}+(d-1)\cot\t-\frac{d-1}{\sin\t}\right)x'+\sqrt{\rho_{\S^d}(\t)}\dive_{\S^{d-1}},
$$
the formal adjoint of $\delta_{\S^d}$ in $L^2(L^2(\S^d))=L^2(\S^d)$. Observe now that, since $\S^d$ is compact,
by \cite[p.~130,~G.IV.2]{BGM}, and \eqref{psi},
\begin{align*}
\int_{\S^d}|\nabla_{\S^d} f(x)|^2\,dx &= \int_{\S^d}f(x)\tilde{\Delta}_{\S^d}f(x)\,dx\\
	&= \int_0^\pi\int_{\S^{d-1}}F(\t,x')\delta^\ast_{\S^d}\delta_{\S^d}F(\t,x')\,dx'(\sin\t)^{d-1}\,d\t \\
	&= \int_0^\pi\int_{\S^{d-1}}|\delta_{\S^d}F(\t,x')|^2\,dx'(\sin\t)^{d-1}\,d\t \\
	&= \int_0^\pi\int_{\S^{d-1}}\left(|\delta F(\t,x')|^2+\rho_{\S^d}(\t)|\nabla_{\S^{d-1}}F(\t,x')|^2\right)
	\,dx'(\sin\t)^{d-1}\,d\t,
\end{align*}
with $\delta=\frac{\partial}{\partial\t}$ as above, where in the last equality we used the orthogonality
 property $x'\cdot\nabla_{\S^{d-1}}F(\t,x')=0$.
Hence, in the coordinates,
$|\nabla_{\S^d}f(x)|^2=|\delta F(\t,x')|^2+\rho_{\S^d}(\t)|\nabla_{\S^{d-1}}F(\t,x')|^2$.
Therefore, by \eqref{definicion integral fraccionaria}, in the coordinates we have
\begin{equation}\label{Riesz separada}
|R_{\S^d}f|^2=|\delta (-\Delta_{\S^d})^{-1/2}F|^2+
\rho_{\S^d}(\t)|\nabla_{\S^{d-1}}(-\Delta_{\S^d})^{-1/2}F|^2.
\end{equation}

\begin{proof}[Proof of Theorem \ref{Thm:Riesz Riemannian} for $M=\S^d$]
By projecting $F$ on the space of spherical harmonics, we can write
\begin{equation}\label{geografia}
F(\theta,x')=\sum_{j=0}^\infty\sum_{k=1}^{d(j)}F_{j,k}(\theta)Y_{j,k}^d(x'),\quad\hbox{for}~\theta\in(0,\pi),~x'\in\mathbb{S}^{d-1},
\end{equation}
where
$$F_{j,k}(\theta)=\int_{\mathbb{S}^{d-1}}F(\theta,x')\overline{Y_{j,k}^d(x')}\,dx'.$$
With this,
\begin{equation}\label{norma}
\|f\|_{L^p(L^2(\S^d))}=\Bigg(\int_0^\pi\Big(\sum_{j=0}^\infty\sum_{k=1}^{d(j)}|F_{j,k}(\theta)|^2\Big)^{p/2} (\sin\theta)^{d-1}d\theta\Bigg)^{1/p}.
\end{equation}
By applying \eqref{geografia} and the orthogonality of the spherical harmonics $Y^d_{j,k}$, we can write
\begin{equation}\label{coeficientes radiales theta}
\begin{aligned}
    \langle f,\varphi_{n,j,k}\rangle_{L^2(\S^d)} &= \langle F,\psi_{n,j}Y^d_{j,k}\rangle_{L^2((0,\pi)\times\S^{d-1},(\sin\theta)^{d-1}d\theta\,dx')} \\ 
    &= \int_0^\pi\int_{\S^{d-1}}\sum_{l=0}^\infty\sum_{s=1}^{d(l)}F_{l,s}(\theta)Y^d_{l,s}(x')\psi_{n,j}(\t) \overline{Y^d_{j,k}(x')}\,dx'\,(\sin\theta)^{d-1}d\theta\\
    &= \sum_{l=0}^\infty\sum_{s=1}^{d(l)}\int_0^\pi F_{l,s}(\theta)\psi_{n,j}(\t)(\sin\theta)^{d-1}d\theta\int_{\S^{d-1}}Y^d_{l,s}(x') \overline{Y^d_{j,k}(x')}\,dx' \\
    &= \int_0^\pi F_{j,k}(\theta)\psi_{n,j}(\t)(\sin\theta)^{d-1}d\theta= \langle F_{j,k},\psi_{n,j}\rangle_{L^2((0,\pi),(\sin\theta)^{d-1}d\theta)}.
\end{aligned}
\end{equation}
By using \eqref{definicion integral fraccionaria}, \eqref{proyecciones}, \eqref{coeficientes radiales theta},
\begin{align*}
    (-\Delta_{\S^d})^{-1/2}f(x) &= \sum_{j=0}^\infty\sum_{n=j}^\infty\sum_{k=1}^{d(j)}\frac{\langle F_{j,k},\psi_{n,j}\rangle_{L^2((0,\pi),(\sin\theta)^{d-1}d\theta)}}{n+\frac{d-1}{2}}\psi_{n,j}(\theta)Y^d_{j,k}(x') \\
    &= \sum_{j=0}^\infty\sum_{n=0}^\infty\sum_{k=1}^{d(j)}\frac{\langle F_{j,k},\psi_{n+j,j}\rangle_{L^2((0,\pi),(\sin\theta)^{d-1}d\theta)}}{n+\frac{2j+d-1}{2}}\psi_{n+j,j}(\theta)Y^d_{j,k}(x') \\
    &= \sum_{j=0}^\infty\sum_{k=1}^{d(j)}\left[\sum_{n=0}^\infty\frac{\langle F_{j,k},\psi_{n+j,j}\rangle_{L^2((0,\pi),(\sin\theta)^{d-1}d\theta)}}{n+\frac{2j+d-1}{2}}\psi_{n+j,j}(\theta)\right]Y^d_{j,k}(x'). \\
\end{align*}
Therefore, by \eqref{Riesz separada}, the Riesz transform can be written as
\begin{align*}
|R_{\S^d}f(x)|^2 &= \Bigg|\sum_{j=0}^\infty\sum_{k=1}^{d(j)}\left[\sum_{n=0}^\infty\frac{\langle F_{j,k},\psi_{n+j,j}\rangle_{L^2((0,\pi),(\sin\theta)^{d-1}d\theta)}}{n+\frac{2j+d-1}{2}}\delta\psi_{n+j,j}(\theta)\right]Y^d_{j,k}(x')\Bigg|^2\\
	& \quad +\rho_{\S^d}(\t)\Bigg|\sum_{j=0}^\infty\sum_{k=1}^{d(j)}\left[\sum_{n=0}^\infty\frac{\langle F_{j,k},\psi_{n+j,j}\rangle_{L^2((0,\pi),(\sin\theta)^{d-1}d\theta)}}{n+\frac{2j+d-1}{2}}\psi_{n+j,j}(\theta)\right]\nabla_{\S^{d-1}}
	Y^d_{j,k}(x')\Bigg|^2.
\end{align*}
Since
$$
\langle F_{j,k},\psi_{n+j,j}\rangle_{L^2((0,\pi),(\sin\theta)^{d-1}d\theta)}=2^{j+\frac{d-1}{2}}
\langle(\sin\t)^{-j} F_{j,k},\P_n^{(\a+j,\a+j)}\rangle_{L^2(d\mu_{\a+j,\a+j})},
$$
and $-\tilde{\Delta}_{\S^{d-1}}Y_{j,k}^d=j(j+d-2)Y_{j,k}^d$, we have
\begin{align*}
	&\int_{\S^{d-1}}|R_{\S^d}f(x)|^2\,dx' \\
	&= \sum_{j=0}^\infty\sum_{k=1}^{d(j)}\left[\left|\sum_{n=0}^\infty\frac{\langle(\sin\t)^{-j} F_{j,k},\P_n^{(\a+j,\a+j)}\rangle_{L^2(d\mu_{\a+j,\a+j})}}{n+\frac{2j+d-1}{2}} \delta\left[(\sin\t)^j\P_n^{(\a+j,\a+j)}(\t)\right]\right|^2\right. \\
	&\qquad \left.+\rho_{\S^d}(\t)\left|\sum_{n=0}^\infty\frac{\langle(\sin\t)^{-j} F_{j,k},\P_n^{(\a+j,\a+j)}\rangle_{L^2(d\mu_{\a+j,\a+j})}}{n+\frac{2j+d-1}{2}}(\sin\t)^j\P_n^{(\a+j,\a+j)}(\t)\right|^2j(j+d-2)\right]\\
	&\le2\sum_{j=0}^\infty\sum_{k=1}^{d(j)}\left[\left|\sum_{n=0}^\infty\frac{\langle(\sin\t)^{-j} F_{j,k},\P_n^{(\a+j,\a+j)}\rangle_{L^2(d\mu_{\a+j,\a+j})}}{n+\frac{2j+d-1}{2}}(\sin\t)^j\delta\P_n^{\a+j,\a+j}(\t)\right|^2\right. \\
	&\qquad +\rho_{\S^d}(\t)\left|j(\sin\t)^j\sum_{n=0}^\infty\frac{\langle(\sin\t)^{-j} F_{j,k},\P_n^{(\a+j,\a+j)}\rangle_{L^2(d\mu_{\a+j,\a+j})}}{n+\frac{2j+d-1}{2}}\P_n^{(\a+j,\a+j)}(\t)\right|^2 \\
	&\qquad \left.+\rho_{\S^d}(\t)\left|\sum_{n=0}^\infty\frac{\langle(\sin\t)^{-j} F_{j,k},\P_n^{(\a+j,\a+j)}\rangle_{L^2(d\mu_{\a+j,\a+j})}}{n+\frac{2j+d-1}{2}}(\sin\t)^j\P_n^{(\a+j,\a+j)}(\t)\right|^2j(j+d-2)\right] \\
	&\le C_d\sum_{j=0}^\infty\sum_{k=1}^{d(j)}\left[\left|(\sin\theta)^j \mathcal{R}^{\a+j,\a+j}\left((\sin\phi)^{-j}F_{j,k}\right)(\t)\right|^2+\left|j(\sin\theta)^j \mathcal{T}_{\S^d}^{\a+j,\a+j}\left((\sin\phi)^{-j}F_{j,k}\right)(\t)\right|^2\right].
\end{align*}
By Theorem \ref{Cor:Lp} and Theorem \ref{Thm:Lp angular} with
$a=b=1$ and $w=1$, from \eqref{norma}, we get
\begin{align*}
  &\|R_{\S^d}f\|_{L^p(L^2(\S^d))} \\
    &\le C\Bigg(\int_0^\pi\left[\sum_{j=0}^\infty\sum_{k=1}^{d(j)}\left|(\sin\theta)^j \mathcal{R}^{\a+j,\a+j}\left((\sin\phi)^{-j}F_{j,k}\right)(\t)\right|^2\right]^{p/2}\,(\sin\t)^{d-1}\,d\t\Bigg)^{1/p}\\
	&\quad+C\Bigg(\int_0^\pi\left[\sum_{j=0}^\infty\sum_{k=1}^{d(j)}\left|j(\sin\theta)^j \mathcal{T}_{\S^d}^{\a+j,\a+j}\left((\sin\phi)^{-j}F_{j,k}\right)(\t)\right|^2\right]^{p/2}\,(\sin\t)^{d-1}\,d\t\Bigg)^{1/p} \\
     &\leq C\Bigg(\int_0^\pi\Bigg(\sum_{j=0}^\infty \sum_{k=1}^{d(j)}|F_{j,k}(\theta)|^2\Bigg)^{p/2}(\sin\theta)^{d-1}d\theta\Bigg)^{1/p}
     = C\|f\|_{L^p(L^2(\S^{d}))}.
\end{align*}
\end{proof}

\subsection{The case of the real projective space $P_d(\Real)$}

To deal with the real projective space we just have to consider even functions on the sphere (as done at the end of Section 2 of \cite{Sherman} or in \cite[Subsection~3.2]{Ciaurri-Roncal-Stinga}, see also \cite[Chapter~III,~Section~C.II]{BGM} or \cite[p.~36]{Chavel Eigenvalues}). More precisely, a function defined on $P_d(\Real)$ can be identified with an even function on $\S^d\subset\Real^{d+1}$. Indeed, the antipodal map $s\longmapsto\pm s$ from $\S^d$ to $P_d(\Real)$ is a Riemannian
covering. The spherical harmonics in $\S^d$ satisfy $Y_{n,j}^{d+1}(-x)=(-1)^nY^{d+1}_{n,j}(x)$. Then the space $L^2_{\mathrm{e}}(\S^d)$ of even functions in $L^2(\S^d)$ can be decomposed as $L^2_{\mathrm{e}}(\S^d)=\bigoplus_{n=0}^\infty\mathcal{H}_{2n}(\S^d)$. Hence, a function on $P_d(\Real)$
is written as
$$f = \sum_{n=0}^\infty\operatorname{Proj}_{\mathcal{H}_{2n}(\S^d)}(f)= \sum_{n\geq0,\,n\,\tiny{\hbox{even}}}^\infty \operatorname{Proj}_{\mathcal{H}_{n}(\S^d)}(f)(x).$$
Thus, Theorem \ref{Thm:Riesz Riemannian} for $P_d(\Real)$ is then established as a particular case
of the result for the sphere applied to even functions.

\subsection{The case of the projective spaces $\CP$, $\HP$ and $\Ca$}

In this subsection we use the tools developed in \cite[Section~4]{Sherman}.
Let $M$ be any of the projective spaces $\CP$, $\HP$ or $\Ca$ and take $-\Delta_M$ as in the introduction, with
$\lambda_M$ as in \eqref{lambdas}. We have the orthogonal direct sum decomposition $L^2(M)=\bigoplus_{n=0}^\infty\mathcal{H}_n(M)$. Each space $\mathcal{H}_n(M)$ is finite-dimensional and corresponds to the eigenspace of $\tilde{\Delta}_M$ with respect to the eigenvalue $-n(n+m+d)$. From here it is readily seen that $\mathcal{H}_n(M)=\{f\in C^\infty(M):-\Delta_Mf=(n+\frac{m+d}{2})^2f\}$.

Now we introduce appropriate polar coordinates on $M$ by following \cite{Sherman}.
Let $\B^{d+1}$ be the unit ball in $\Real^{d+1}$ and $\omega(r):=c_\omega r^{-1}(1-r)^m$, for $0<r<1$,
with $c_\omega=\frac{\Gamma(m+d+1)}{\Gamma(d)\Gamma(m+1)}$.  According to \cite[Lemma~4.15]{Sherman} there is a bounded linear map $E:L^1(M)\to L^1(\B^{d+1},\omega(|x|)\,dx)$ such that for every $f\in L^1(M)$,
$$\int_Mf\,d\mu_M=\int_{\B^{d+1}}E(f)\omega(|x|)\,dx,$$
where $d\mu_M$ is the Riemannian measure on $M$. Each $x\in \B^{d+1}$, $x\neq0$, can be written in
polar coordinates as $x=rx'$, where $0<r=|x|<1$ and $x'\in\S^d$. Then, if we write $r=(\sin\frac{\t}{2})^2$ for $0<\theta<\pi$ and $F(\t,x')=E(f)(rx')$ we have that integration over $M$ reduces to
$$\int_Mf\,d\mu_M=c_{\omega}\int_0^\pi\int_{\S^d}F(\t,x')\,dx'(\sin\tfrac{\t}{2})^{2d-1}(\cos\tfrac{\t}{2})^{2m+1}\,d\t.$$
Let $\mathcal{F}_n(\B^{d+1})$ be the space of functions on $\B^{d+1}$ which are polynomials of degree less than or equal to $n$ in the variables $x$ and $r=|x|$. Define the set $\mathcal{H}_n(\B^{d+1},\omega)$ as the orthocompliment of the space $\mathcal{F}_{n-1}(\B^{d+1})$ in $\mathcal{F}_n(\B^{d+1})$ with respect to the inner product on $L^2(\B^{d+1},\omega(|x|)\,dx)$.
It is shown in \cite[Corollary~4.26]{Sherman} that $E(\mathcal{H}_n(M))=\mathcal{H}_n(\B^{d+1},\omega)$. With polar coordinates and the trigonometric change as above,
the eigenspaces $\mathcal{H}_n(\B^{d+1},\omega)=\mathcal{H}_n((0,\pi)\times\mathbb{S}^d,c_{\omega}d\mu_{d-1,m}\times
dx')$. These spaces are eigenspaces of the differential operator
\begin{align}\label{laplaciano polares M}
	-\Delta_M &= -\frac{\partial^2}{\partial \t^2}-\frac{(d-1-m)+(d+m)\cos\t}{\sin\t}\frac{\partial}{\partial \t}
		+\lambda_M-\frac{1}{\sin^2\frac{\t}{2}}\tilde{\Delta}_{\S^d}
\notag	= \J^{\a,\b}-\rho_M(\t)\tilde{\Delta}_{\S^d},
\end{align}
where $\a=d-1\geq1$ and $\b=m\geq0$, see \cite[Section~4,~pp.~135--136]{Sherman}. The corresponding eigenvalues are $\left(n+\frac{m+d}{2}\right)^2$, see \cite[Theorem~4.22]{Sherman}. In this way it is possible to write $L^2((0,\pi)\times\mathbb{S}^d,c_{\omega}d\mu_{\a,\b}\times dx')=\bigoplus_{n=0}^\infty\mathcal{H}_n((0,\pi)\times\mathbb{S}^d,c_{\omega}d\mu_{\a,\b}\times dx')
=\bigoplus_{n=0}^\infty\bigoplus_{j=0}^n\mathcal{H}_{n,j}((0,\pi)\times\mathbb{S}^d,c_{\omega}d\mu_{\a,\b}\times dx')$. A basis of the latter spaces is
$$\varphi^M_{n,j,k}(x)=\psi^M_{n,j}(\t)Y^{d+1}_{j,k}(x'),\quad x=(\sin\tfrac{\t}{2})^2\,x',~0<\t<\pi,~x'\in\S^d,$$
where, for $P_n^{(\a,\b)}$ as in \eqref{eq1},
$$\psi^M_{n,j}(r)=(-1)^{n-j}a^M_{n,j}(\sin\tfrac{\t}{2})^{2j}P_{n-j}^{(d-1+2j,m)}(\cos\t),$$
and the set $\{Y^{d+1}_{j,k}\}_{1\leq k\leq d(j)}$ is an orthonormal basis of the space of spherical harmonics of degree $j$, and $d(j):=(2j+d-1)\frac{(j+d-2)!}{j!(d-1)!}$. By virtue of \eqref{trig pol} and \eqref{que} the normalizing constant is
$$a^M_{n,j}=\|\varphi^M_{n,j,k}\|_{L^2((0,\pi)\times\mathbb{S}^d,c_{\omega}d\mu_{\a,\b}\times dx')}^{-1}=c_\omega^{-1/2}d_{n-j}^{\a+2j,\b}.$$
Thus, the projections can be written in these coordinates as
\begin{equation}\label{eq44}
\operatorname{Proj}_{\mathcal{H}_n(M)}(f)=\sum_{j=0}^n\sum_{k=1}^{d(j)}\langle F,\varphi_{n,j,k}^{M}\rangle_{L^2((0,\pi)\times\mathbb{S}^d,c_{\omega}d\mu_{\a,\b}\times dx')}\varphi_{n,j,k}^M.
\end{equation}

The mixed norm spaces $L^p(L^2(M))$, $1\leq p<\infty$, are defined as the set of functions $f$ on $M$ for which the norm
\begin{equation}
\label{norma M}
\|f\|_{L^p(L^2(M))}=\Bigg(c_{\omega}\int_0^{\pi}\Bigg(\int_{\S^d}|F(\t,x')|^2 \,dx'\Bigg)^{p/2}~(\sin\tfrac{\t}{2})^{2d-1}(\cos\tfrac{\t}{2})^{2m+1}\,d\t\Bigg)^{1/p}
\end{equation}
is finite. The spaces $L^p(L^2(M))$ are Banach spaces.

Now we pass to the definition of the Riesz transform on the mixed norm space $L^p(L^2(M))$. By \eqref{eq44}, in the coordinates we let
\begin{equation}\label{int frac}
(-\Delta_M)^{-1/2}f\equiv (-\Delta_M)^{-1/2}F
=\sum_{n=0}^\infty\frac{1}{n+\frac{m+d}{2}}\operatorname{Proj}_{\mathcal{H}_n(M)}(f).
\end{equation}
We have $-\Delta_M=\delta_{M}^*\delta_{M}+\lambda_M$, with
$$\delta_{M}=x'\frac{\partial }{\partial \t}+\sqrt{\rho_M(\t)}\nabla_{\mathbb{S}^d},$$
and
$$
\delta^*_{M}=-\left(\frac{\partial }{\partial \t}+\frac{2m+1}{2}\cot\tfrac{\t}{2}-\frac{2d-1}{2}\tan\tfrac{\t}{2}\right)x'+
\sqrt{\rho_M(\t)}\dive_{\mathbb{S}^d}.
$$
Analogously to the case of the sphere $\mathbb{S}^d$ in Subsection \ref{subsection:redonda}, we have
$$
\int_{M}|\nabla_{M} f(x)|^2\,dx = c_{\omega}\int_0^\pi\int_{\S^{d}}\left(|\delta F(\t,x')|^2+\rho_M(\t)|\nabla_{\S^{d}}F(\t,x')|^2\right)
	\,dx'd\mu_{d-1,m}(\t).
$$
Therefore, in the coordinates, $|\nabla_Mf(x)|^2=|\delta F(\t,x')|^2+\rho_M(\t)|\nabla_{\S^{d}}F(\t,x')|^2$.
In this way, by \eqref{int frac},
\begin{equation}
\label{Riesz separada M}
|R_Mf|^2=|\delta (-\Delta_{M})^{-1/2}F|^2+\rho_M(\t)|\nabla_{\S^{d}}(-\Delta_{M})^{-1/2}F|^2.
\end{equation}

\begin{proof}[Proof of Theorem \ref{Thm:Riesz Riemannian} for $M=\CP, \HP,\Ca$.]
Parallel to the case of the sphere, we can write
$$\|f\|_{L^p(L^2(M))}=\Bigg(c_{\omega}\int_0^\pi\Big(\sum_{j=0}^\infty\sum_{k=1}^{d(j)}|F_{j,k}(\t)|^2\Big)^{p/2} d\mu_{d-1,m}(\t)\Bigg)^{1/p},$$
with
$$F_{j,k}(\t)=\int_{\mathbb{S}^{d}}F(\t,x')\overline{Y^{d+1}_{j,k}(x')}\,dx',\quad \t\in(0,\pi).$$
Proceeding as in \eqref{coeficientes radiales theta},
$$\langle F,\varphi^M_{n,j,k}\rangle_{L^2((0,\pi)\times\mathbb{S}^d,c_{\omega}d\mu_{d-1,m}\times dx')}=c_{\omega}\langle F_{j,k},\psi^M_{n,j}\rangle_{L^2(
d\mu_{d-1,m})}.$$
Hence, by \eqref{eq44} and \eqref{int frac},
$$ (-\Delta_M)^{-1/2}f(x)= \sum_{j=0}^\infty\sum_{k=1}^{d(j)}\left[\sum_{n=0}^\infty\frac{c_{\omega}\langle F_{j,k},\psi^M_{n+j,j}\rangle_{L^2(d\mu_{d-1,m})}}{n+\frac{2j+m+d}{2}}\,\psi^M_{n+j,j}(\t)\right]Y^{d+1}_{j,k}(x').$$
From here and \eqref{Riesz separada M}, we can write
 \begin{align*}
|R_{M}f(x)|^2 &= \Bigg|\sum_{j=0}^\infty\sum_{k=1}^{d(j)}\left[\sum_{n=0}^\infty\frac{c_{\omega}\langle F_{j,k},\psi^M_{n+j,j}\rangle_{L^2(d\mu_{d-1,m})}}{n+\frac{2j+m+d}{2}}\,\delta\psi^M_{n+j,j}(\t)\right]Y^{d+1}_{j,k}(x')\Bigg|^2\\
	& \quad +\rho_M(\t)\Bigg|\sum_{j=0}^\infty\sum_{k=1}^{d(j)}\left[\sum_{n=0}^\infty\frac{c_{\omega}\langle F_{j,k},\psi^M_{n+j,j}\rangle_{L^2(d\mu_{d-1,m})}}{n+\frac{2j+m+d}{2}}\,\psi^M_{n+j,j}(\t)\right]\nabla_{\S^{d}}
	Y^{d+1}_{j,k}(x')\Bigg|^2.
\end{align*}
Recall that we are taking $\alpha=d-1, \beta=m$, so we obtain
$$
c_{\omega}\langle F_{j,k},\psi_{n+j,j}\rangle_{L^2(d\mu_{d-1,m})}=
c_{\omega}^{1/2}\langle(\sin\tfrac{\t}{2})^{-2j} F_{j,k},\P_n^{(\a+2j,\b)}\rangle_{L^2(d\mu_{\a+2j,\b})}.
$$
Hence, analogously to the case of the sphere,
\begin{align*}
	&\int_{\S^{d}}|R_{M}f(x)|^2\,dx' \\
	&\le 2\sum_{j=0}^\infty\sum_{k=1}^{d(j)}\left[\left|\sum_{n=0}^\infty\frac{c_{\omega}^{1/2}\langle(\sin\tfrac{\t}{2})^{-2j} F_{j,k},\P_n^{(\a+2j,\b)}\rangle_{L^2(d\mu_{\a+2j,\b})}}{n+\frac{2j+m+d}{2}} (\sin\tfrac{\t}{2})^{2j}\delta\P_n^{\a+2j,\b}(\t)\right|^2\right. \\
	&\qquad +\rho_M(\t)\left|j(\sin\tfrac{\t}{2})^{2j}\sum_{n=0}^\infty\frac{c_{\omega}^{1/2}\langle(\sin\tfrac{\t}{2})^{-2j} F_{j,k},\P_n^{(\a+2j,\b)}\rangle_{L^2(d\mu_{\a+2j,\b})}}{n+\frac{2j+m+d}{2}}\P_n^{(\a+2j,\b)}(\t)\right|^2 \\
	&\qquad \left.+\rho_M(\t)\left|\sum_{n=0}^\infty\frac{c_{\omega}^{1/2}\langle(\sin\tfrac{\t}{2})^{-2j} F_{j,k},\P_n^{(\a+2j,\b)}\rangle_{L^2(d\mu_{\a+2j,\b})}}{n+\frac{2j+m+d}{2}}(\sin\tfrac{\t}{2})^{2j}\P_n^{(\a+2j,\b)}(\t)\right|^2j(j+d-1)\right] \\
	&\le C\sum_{j=0}^\infty\sum_{k=1}^{d(j)}\left[\left|(\sin\tfrac{\theta}{2})^{2j} \mathcal{R}^{\a+2j,\b}\left((\sin\tfrac{\phi}{2})^{-2j}F_{j,k}\right)(\t)\right|^2+\left|j(\sin\tfrac{\theta}{2})^{2j} \mathcal{T}_M^{\a+2j,\b}\left((\sin\tfrac{\phi}{2})^{-2j}F_{j,k}\right)(\t)\right|^2\right].
\end{align*}
By Theorem \ref{Cor:Lp} and Theorem \ref{Thm:Lp angular} with $a=2$, $b=0$ and $w=1$, by using \eqref{norma M},
\begin{align*}
  &\|R_{M}f\|_{L^p(L^2(M))} \\
    &\le C\left(\int_0^\pi\left[\sum_{j=0}^\infty\sum_{k=1}^{d(j)}\left|(\sin\tfrac{\theta}{2})^{2j} \mathcal{R}^{\a+2j,\b}\left((\sin\tfrac{\phi}{2})^{-2j}F_{j,k}\right)(\t)\right|^2\right]^{p/2}\,d\mu_{\a,\b}(\t)\right)^{1/p}\\
	&\quad+C\left(\int_0^\pi\left[\sum_{j=0}^\infty\sum_{k=1}^{d(j)}\left|j(\sin\tfrac{\theta}{2})^{2j} \mathcal{T}_M^{\a+2j,\b}\left((\sin\tfrac{\phi}{2})^{-2j}F_{j,k}\right)(\t)\right|^2\right]^{p/2}\,d\mu_{\a,\b}(\t)\right)^{1/p}\\
     &\leq C\left(\int_0^\pi\left(\sum_{j=0}^\infty \sum_{k=1}^{d(j)}|F_{j,k}(\theta)|^2\right)^{p/2}\,d\mu_{\a,\b}(\t)\right)^{1/p}
     = C\|f\|_{L^p(L^2(M))}.
\end{align*}
 \end{proof}
 
\section{The extrapolation theorem and the vector-valued extensions}\label{proofs vector-valued}

In this section we show the adaptation of Rubio de Francia's extrapolation theorem in Theorem \ref{th:extra-RF} and the weighted vector-valued extensions for the Jacobi--Riesz transforms and the auxiliary operator $\mathcal{T}_M^{\a,\b}$, collected in Theorems \ref{Cor:Lp} and \ref{Thm:Lp angular}. 

For $1< p<\infty$, we denote by $A_p^{\a,\b}$ the class of $A_p$ weights on the space of homogeneous type $((0,\pi),d\mu_{\a,\b}(\t),|\cdot|)$, see \cite{Calderon}.
Namely, $A_p^{\a,\b}$ is the class of nonnegative functions $w\in L_{\mathrm{loc}}^{1}(d\mu_{\a,\b})$ for which $w^{-p'/p}\in L_{\mathrm{loc}}^{1}(d\mu_{\a,\b})$ and
$$\sup_{I\subset(0,\pi)}\left(\frac{1}{\mu_{\a,\b}(I)}\int_Iw\,d\mu_{\a,\b}\right)
\left(\frac{1}{\mu_{\a,\b}(I)}\int_Iw^{-p'/p}\,d\mu_{\a,\b}\right)^{p/p'}<\infty.$$
The Jacobi--Hardy--Littlewood maximal function is given by
$$\mathcal{M}_{\a,\b} f(\t)=\sup_{\t\in I}\frac{1}{\mu_{\a,\b}(I)}\int_{I}|f(\v)|\,
d\mu_{\a,\b}(\v),$$
where $|I|$ denotes the length of the interval $I\subset(0,\pi)$.
As a particular case of \cite[Theorem~3]{Calderon}, we have that $\mathcal{M}_{\a,\b}$ is bounded in
$L^p(w\,d\mu_{\a,\b})$ for $w\in A_p^{\a,\b}$, $1<p<\infty$, 
and it satisfies a weighted weak-$(1,1)$ estimate
(we say that $w\in A_1^{\a,\b}$ whenever $\mathcal{M}_{\a,\b}w(\t)\leq Cw(\t)$, $d\mu_{\a,\b}$-a.e.).

\begin{thm}[Extrapolation theorem]\label{th:extra-RF}
Assume that for some family of pairs of nonnegative functions $(f,g)$, for
some fixed $1<r<\infty$ and for all $w\in A_r^{\a,\b}$ we have
$$
\int_0^\pi g(\t)^rw(\t)\,d\mu_{\a,\b}(\t)\le
C\int_0^{\pi}f(\t)^rw(\t)\,d\mu_{\a,\b}(\t),
$$
with $C$ non depending on the pair $(f,g)$. Then, for all $1<p<\infty$ and all
$w\in A_p^{\a,\b}$ we have
$$
\int_0^\pi g(\t)^pw(\t)\,d\mu_{\a,\b}(\t)\le C\int_0^{\pi}
f(\t)^pw(\t)\,d\mu_{\a,\b}(\t).
$$
\end{thm}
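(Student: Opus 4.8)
The plan is to follow Rubio de Francia's classical extrapolation argument, adapted to the space of homogeneous type $((0,\pi),d\mu_{\a,\b},|\cdot|)$. The whole proof rests on one ingredient: the boundedness and weak-$(1,1)$ estimate for the Jacobi--Hardy--Littlewood maximal function $\mathcal{M}_{\a,\b}$ on weighted spaces, which has been quoted above from \cite{Calderon}, together with the standard structural properties of $A_p$ classes (the fact that $w\in A_p^{\a,\b}$ iff $w^{-p'/p}\in A_{p'}^{\a,\b}$, the openness/self-improving property, and duality). Since everything takes place in a single fixed space of homogeneous type, none of the geometric subtleties of the underlying manifolds enter here; this is purely a measure-theoretic statement.

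The key device is the Rubio de Francia iteration operator. Fix $1<p<\infty$ and $w\in A_p^{\a,\b}$. I will treat two cases depending on whether $p>r$ or $p<r$ (the case $p=r$ being the hypothesis). Suppose first $p>r$. Given a nonnegative $h\in L^{(p/r)'}(w\,d\mu_{\a,\b})$, define
$$
Rh(\t):=\sum_{k=0}^\infty\frac{\mathcal{M}_{\a,\b}^{(k)}h(\t)}{(2\|\mathcal{M}_{\a,\b}\|_{L^{(p/r)'}(w\,d\mu_{\a,\b})\to L^{(p/r)'}(w\,d\mu_{\a,\b})})^k},
$$
where $\mathcal{M}_{\a,\b}^{(k)}$ is the $k$-fold iterate. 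Then $Rh$ has three standard properties: $h\le Rh$ pointwise; $\|Rh\|_{L^{(p/r)'}(w\,d\mu_{\a,\b})}\le 2\|h\|_{L^{(p/r)'}(w\,d\mu_{\a,\b})}$; and $Rh\,w\in A_1^{\a,\b}$ with constant independent of $h$, because $\mathcal{M}_{\a,\b}(Rh)\le 2\|\mathcal{M}_{\a,\b}\|Rh$ and an $A_1^{\a,\b}$ weight times an $A_r^{\a,\b}$ weight, suitably normalized, is $A_r^{\a,\b}$. Then I apply the hypothesis with the weight $W:=(Rh)\,w$ and use H\"older's inequality with exponents $p/r$ and $(p/r)'$:
$$
\int_0^\pi g^r w\,d\mu_{\a,\b}\le\int_0^\pi g^r (Rh)w\,d\mu_{\a,\b}\le C\int_0^\pi f^r (Rh)w\,d\mu_{\a,\b}
\le C\|f^r\|_{L^{p/r}(w\,d\mu_{\a,\b})}\|Rh\|_{L^{(p/r)'}(w\,d\mu_{\a,\b})}.
$$
Taking the supremum over all $h$ with $\|h\|_{L^{(p/r)'}(w\,d\mu_{\a,\b})}\le 1$ and invoking duality for $L^{p/r}(w\,d\mu_{\a,\b})$ recovers $\|g\|_{L^p(w\,d\mu_{\a,\b})}\le C\|f\|_{L^p(w\,d\mu_{\a,\b})}$, which is the claim. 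One must check the hypothesis can legitimately be applied to $W$: this requires verifying that $W\in A_r^{\a,\b}$ and that $W$ is finite $d\mu_{\a,\b}$-a.e., the latter following from $Rh\in L^{(p/r)'}(w\,d\mu_{\a,\b})$.

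For the case $p<r$, I would dualize: apply the already-proven conclusion (or a symmetric version of the argument) with $w$ replaced by $w^{-p'/p}$, the exponent $p$ by $p'$, and the roles handled via the identity relating $A_p^{\a,\b}$ to $A_{p'}^{\a,\b}$; this reduces $p<r$ to a situation of the form $p'>r'$ handled by the first case applied to the hypothesis at exponent $r$, which by the first part already holds at $r'$ as well once we know it holds at one exponent above $r$. (Concretely: the hypothesis at $r$ plus the first-case argument upgrades it to all exponents $>r$; a second application of the construction, this time building an $A_1^{\a,\b}$ weight in the \emph{dual} scale, brings it down below $r$.) The main obstacle, and the only place real care is needed, is bookkeeping the constants and confirming that the $A_1^{\a,\b}$--$A_r^{\a,\b}$ factorization and the self-improvement of $A_p^{\a,\b}$ classes genuinely hold in this space of homogeneous type; these are known (again via \cite{Calderon} and standard Calder\'on--Zygmund theory on spaces of homogeneous type), so the adaptation is routine once one is careful that every maximal-function bound invoked is the \emph{weighted} one with the correct weight.
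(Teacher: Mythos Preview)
Your overall strategy is the same as the paper's---adapt Rubio de Francia's extrapolation (in Duoandikoetxea's formulation) to the homogeneous-type space $((0,\pi),d\mu_{\a,\b},|\cdot|)$---but the execution of the case $p>r$ has a genuine gap. You iterate $\mathcal{M}_{\a,\b}$ on $L^{(p/r)'}(w\,d\mu_{\a,\b})$; however, $w\in A_p^{\a,\b}$ does not imply $w\in A_{(p/r)'}^{\a,\b}$ (take $r=2$, $p=10$, so $(p/r)'=5/4<p$, and $A_p^{\a,\b}\not\subset A_{5/4}^{\a,\b}$), so the maximal function need not be bounded there and the series defining $Rh$ may diverge. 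Even granting $Rh\in A_1^{\a,\b}$, the assertion that $Rh\cdot w\in A_1^{\a,\b}$ (or $A_r^{\a,\b}$) because ``an $A_1$ weight times an $A_r$ weight is $A_r$'' is incorrect: $A_1\cdot A_q\not\subset A_q$ in general, and in any case you only know $w\in A_p^{\a,\b}$ with $p>r$, not $w\in A_r^{\a,\b}$. Your treatment of $p<r$ by ``dualizing'' is also too vague to stand as written.

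The repair is exactly what the paper singles out by citing \emph{both} Lemma~2.1 (factorization) and Lemma~2.2 (the \emph{two} Rubio de Francia operators $Rf$ and $R'H$) of \cite{Duo Extrapolation}. For $p>r$ one must use the dual iteration $\mathcal{R}'$ built from $S'h:=\mathcal{M}_{\a,\b}(hw)/w$, which \emph{is} bounded on $L^{p'}(w\,d\mu_{\a,\b})$ since $w^{1-p'}\in A_{p'}^{\a,\b}$; this yields $(\mathcal{R}'h)\,w\in A_1^{\a,\b}$, and then Jones factorization $w=w_1w_2^{1-p}$ (valid in spaces of homogeneous type by \cite{Rubio2}) is what places the resulting weight in $A_r^{\a,\b}$. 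For $p<r$ the argument is direct: iterate $\mathcal{M}_{\a,\b}$ on $L^p(w\,d\mu_{\a,\b})$ (here boundedness \emph{is} guaranteed), obtain $Rf\in A_1^{\a,\b}$, and combine with factorization and H\"older with exponent $r/p$ to land the weight $(Rf)^{p-r}w$ in $A_r^{\a,\b}$. The two ingredients you are missing are precisely the dual construction $R'H$ and the explicit use of factorization.
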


\begin{proof}
We follow the proof given by J. Duoandikoetxea in \cite[Theorem~3.1]{Duo Extrapolation}.
The main ingredients are the factorization theorem (see \cite[Lemma~2.1]{Duo Extrapolation})
and the construction of the Rubio de Francia weights $Rf$ and $RH$ (see \cite[Lemma~2.2]{Duo Extrapolation})
in our context. The first ingredient is available here because of the general factorization
theorem proved by Rubio
de Francia \cite[Section~3]{Rubio2}. For the second ingredient we use the operator $\mathcal{M}_{\a,\b}$
to construct the weights $Rf$ and $RH$ as in Lemma 2.1 of \cite{Duo Extrapolation}.
Then the proof follows the same lines as in \cite{Duo Extrapolation}.
\end{proof}

Let
\begin{equation}
\label{ujs}
u_j(\t)=(\sin\tfrac{\t}{2})^{aj}(\cos\tfrac{\t}{2})^{bj},\quad\t\in(0,\pi),~j=0,1,\ldots.
\end{equation}

\begin{thm}[Vector-valued extension for $\mathcal{R}^{\a,\b}$]\label{Cor:Lp}
Let $\a,\b>-1/2$, $a\geq1$, $b=0$ or $b\geq1$, $1<p,r<\infty$. Let $u_j$ be as in \eqref{ujs}.
Then there is a constant $C$ such that
$$
\Big\|\Big(\sum_{j,k=0}^\infty|u_j\mathcal{R}^{\a+aj,\b+bj}(u_j^{-1}f_{j,k})|^r\Big)^{1/r}\Big\|_{L^p(w\,d\mu_{\a,\b})}\le C\Big\|\Big(\sum_{j,k=0}^\infty |f_{j,k}|^r\Big)^{1/r}\Big\|_{L^p(w\,d\mu_{\a,\b})},$$
for all $f_{j,k}\in L^p(w\,d\mu_{\a,\b}):=L^p((0,\pi),w(\t)\,d\mu_{\a,\b}(\t))$ and $w\in A_p^{\a,\b}$.
\end{thm}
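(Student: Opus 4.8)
\emph{Strategy.} The plan is to diagonalise the vector-valued weighted inequality into a single scalar estimate that is uniform in the summation index, and then invoke the extrapolation theorem. Write $\gamma_j=\a+aj$, $\s_j=\b+bj$ and
$$T_jf:=u_j\,\mathcal{R}^{\gamma_j,\s_j}(u_j^{-1}f),\qquad j=0,1,\ldots,$$
so that the left-hand side of the asserted inequality is the $L^p(w\,d\mu_{\a,\b})$ norm of $\big(\sum_{j,k}\abs{T_jf_{j,k}}^r\big)^{1/r}$. Fix $1<r<\infty$ and apply Theorem \ref{th:extra-RF} to the family of pairs $(f,g)=\big(\big(\sum_{j,k}\abs{f_{j,k}}^r\big)^{1/r},\big(\sum_{j,k}\abs{T_jf_{j,k}}^r\big)^{1/r}\big)$; it then suffices to check the hypothesis of that theorem, i.e. the case $p=r$. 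By Tonelli's theorem the $p=r$ inequality reads $\sum_{j,k}\norm{T_jf_{j,k}}_{L^r(w\,d\mu_{\a,\b})}^r\le C\sum_{j,k}\norm{f_{j,k}}_{L^r(w\,d\mu_{\a,\b})}^r$ for $w\in A_r^{\a,\b}$, and since $T_j$ is independent of $k$ this follows at once from a single bound
$$\norm{T_jf}_{L^r(w\,d\mu_{\a,\b})}\le C\,\norm{f}_{L^r(w\,d\mu_{\a,\b})},\qquad w\in A_r^{\a,\b},$$
with $C$ depending on $r,\a,\b,a,b$ and on the $A_r^{\a,\b}$ characteristic of $w$, but \emph{not} on $j$. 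Granting this uniform-in-$j$ bound, extrapolation gives the theorem for the chosen $r$, and $r\in(1,\infty)$ being arbitrary, we are done.

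\emph{The uniform scalar bound.} To obtain it I would exhibit each $T_j$ as a Calderón--Zygmund operator on the \emph{single, fixed} space of homogeneous type $((0,\pi),d\mu_{\a,\b},\abs{\cdot})$, and then apply the general Calderón--Zygmund theory on such spaces, which yields a weighted $L^r$ bound depending only on $r$, on the $A_r^{\a,\b}$ characteristic of $w$, on the (fixed) doubling constant of $d\mu_{\a,\b}$, and on the Calderón--Zygmund constants of the operator. Two points are needed. First, $L^2$ boundedness: from \eqref{ujs} one has $d\mu_{\gamma_j,\s_j}=u_j^2\,d\mu_{\a,\b}$, while $\a,\b>-1/2$, $a\ge1$ and $b\in\{0\}\cup[1,\infty)$ force $\gamma_j,\s_j>-1/2$; hence $\J^{\gamma_j,\s_j}=\delta^\ast\delta+\big(\tfrac{\gamma_j+\s_j+1}{2}\big)^2\ge\delta^\ast\delta$ (with $\delta=\frac{d}{d\t}$ and $\delta^\ast$ its adjoint in $L^2(d\mu_{\gamma_j,\s_j})$), so the spectral theorem gives $\norm{\mathcal{R}^{\gamma_j,\s_j}}_{L^2(d\mu_{\gamma_j,\s_j})\to L^2(d\mu_{\gamma_j,\s_j})}\le1$, and changing the measure back to $d\mu_{\a,\b}$ yields $\norm{T_j}_{L^2(d\mu_{\a,\b})\to L^2(d\mu_{\a,\b})}\le1$ for every $j$. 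Second, the kernel: denoting by $\mathcal{R}^{\gamma_j,\s_j}(\t,\v)$ the kernel of $\mathcal{R}^{\gamma_j,\s_j}$ with respect to $d\mu_{\gamma_j,\s_j}$, the operator $T_j$ is represented off the diagonal by $u_j(\t)u_j(\v)\,\mathcal{R}^{\gamma_j,\s_j}(\t,\v)$ with respect to $d\mu_{\a,\b}$, and using the sharp, parameter-uniform estimates for the Jacobi--Riesz kernel of Theorem \ref{Thm:Jacobi-Riesz kernel} one verifies that these conjugated kernels satisfy the standard size and regularity conditions relative to $((0,\pi),d\mu_{\a,\b},\abs{\cdot})$, with constants independent of $j$. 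Hence each $T_j$ is a Calderón--Zygmund operator on this fixed space with constants uniform in $j$, and the quoted theory then delivers the weighted bound of the first paragraph uniformly in $j$.

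\emph{The main obstacle.} The delicate step is this last transfer: passing from bounds on $\mathcal{R}^{\gamma_j,\s_j}(\t,\v)$ — naturally measured against $d\mu_{\gamma_j,\s_j}$-balls — to standard Calderón--Zygmund estimates for the conjugated kernel $u_j(\t)u_j(\v)\,\mathcal{R}^{\gamma_j,\s_j}(\t,\v)$ relative to the fixed measure $d\mu_{\a,\b}$, \emph{uniformly as $j\to\infty$} (equivalently, as the type parameters $\gamma_j,\s_j$ tend to infinity). One cannot simply run the theory directly on $((0,\pi),d\mu_{\gamma_j,\s_j},\abs{\cdot})$, since the doubling constant of $d\mu_{\gamma_j,\s_j}$ blows up with $j$; and near the endpoints $\t=0,\pi$ the ratio $\mu_{\gamma_j,\s_j}(B)/\mu_{\a,\b}(B)$ is not comparable to $u_j(\t)u_j(\v)$, so the crude size bound $\abs{\mathcal{R}^{\gamma,\s}(\t,\v)}\lesssim\mu_{\gamma,\s}(B(\t,\abs{\t-\v}))^{-1}$ is not enough. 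This is exactly why the kernel estimates must be pushed to the sharp form of Theorem \ref{Thm:Jacobi-Riesz kernel}: one must exploit the extra decay carried by the factor $\sin\t$ (respectively $\sin\tfrac{\t}{2}$) in the Riesz kernel in order to absorb the weights $u_j(\t)u_j(\v)$ — this is where the restrictions $a\ge1$ and $b\in\{0\}\cup[1,\infty)$ on the regularity of $u_j$ at the endpoints are used — and every constant has to be kept free of $\a$ and $\b$; this computation, carried out in Section \ref{Section:Kernels}, is the technical core. By contrast the extrapolation ingredient is soft: as noted after Theorem \ref{th:extra-RF}, the Rubio de Francia argument applies verbatim to pairs of nonnegative functions, so the vector-valued conclusion is automatic once the case $p=r$ is in hand.
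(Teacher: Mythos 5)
Your proposal is correct and follows essentially the same route as the paper: a uniform-in-$j$ weighted $L^r$ bound for the conjugated operators, obtained from Calder\'on--Zygmund theory on the fixed homogeneous space $((0,\pi),d\mu_{\a,\b},\abs{\cdot})$ via the sharp kernel estimates of Theorem \ref{Thm:Jacobi-Riesz kernel}, followed by summation in $j,k$ and Rubio de Francia extrapolation applied to the pair $\big(\big(\sum_{j,k}\abs{f_{j,k}}^r\big)^{1/r},\big(\sum_{j,k}\abs{T_jf_{j,k}}^r\big)^{1/r}\big)$. The only addition is your explicit $L^2(d\mu_{\a,\b})$ contraction bound for $T_j$ via the identity $d\mu_{\a+aj,\b+bj}=u_j^2\,d\mu_{\a,\b}$, a detail the paper leaves implicit in its appeal to the general theory.
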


\begin{thm}[Vector-valued extension for $\mathcal{T}_M^{\a,\b}$]
\label{Thm:Lp angular}
Let $\a, \b>-1/2$, $a=1$ or $a\geq2$, $b=0$ or $b\geq1$ and $1<p,r<\infty$. Let $u_j$ be as in \eqref{ujs}.
When $M=\S^d$, we also assume that $\b>0$ and $b\geq1$.
Then there exists a constant $C$ such that
$$
\Big\|\Big(\sum_{j,k=1}^\infty|ju_j\mathcal{T}_M^{\a+aj,\b+bj}(u_j^{-1}f_{j,k})|^r
\Big)^{1/r}\Big\|_{L^p(w\,d\mu_{\a,\b})}\le C\Big\|\Big(\sum_{j,k=1}^\infty |f_{j,k}|^r\Big)^{1/r}\Big\|_{L^p(w\,d\mu_{\a,\b})},
$$
for all $f_{j,k}\in L^p(w\,d\mu_{\a,\b})$ and all $w\in A_p^{\a,\b}$.
\end{thm}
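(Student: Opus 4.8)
The plan is to deduce Theorem~\ref{Thm:Lp angular} from the extrapolation theorem, Theorem~\ref{th:extra-RF}, in exactly the same way Theorem~\ref{Cor:Lp} is obtained. The first reduction is to pass from the vector-valued inequality to scalar weighted inequalities. Given the extrapolation machinery, it suffices to prove that for some fixed exponent $r_0$ (say $r_0=2$) and \emph{every} $w\in A_{r_0}^{\a,\b}$,
\begin{equation*}
\int_0^\pi \Big(\sum_{j,k=1}^\infty |ju_j\mathcal{T}_M^{\a+aj,\b+bj}(u_j^{-1}f_{j,k})|^{r_0}\Big) w\,d\mu_{\a,\b}
\le C \int_0^\pi \Big(\sum_{j,k=1}^\infty |f_{j,k}|^{r_0}\Big) w\,d\mu_{\a,\b},
\end{equation*}
with $C$ independent of the family $\{f_{j,k}\}$; then applying Theorem~\ref{th:extra-RF} to the pairs $\big(g,f\big)=\big((\sum|ju_j\mathcal{T}_M^{\a+aj,\b+bj}(u_j^{-1}f_{j,k})|^{r_0})^{1/r_0},(\sum|f_{j,k}|^{r_0})^{1/r_0}\big)$ upgrades the $L^{r_0}(w\,d\mu_{\a,\b})$ bound to $L^p(w\,d\mu_{\a,\b})$ for all $1<p<\infty$, which is the claimed inequality once we recall that the $\ell^{r_0}$-norm inside is itself an arbitrary nonnegative function. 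By Fubini/Tonelli and monotone convergence, the displayed vector inequality is in turn an immediate consequence of the scalar estimate
\begin{equation*}
\int_0^\pi |ju_j\mathcal{T}_M^{\a+aj,\b+bj}(u_j^{-1}f)|^{r_0} w\,d\mu_{\a,\b}
\le C\int_0^\pi |f|^{r_0} w\,d\mu_{\a,\b},\qquad w\in A_{r_0}^{\a,\b},
\end{equation*}
provided the constant $C$ is \emph{uniform in $j$}; this is the crucial point and the reason the kernel estimates of Theorem~\ref{Angular Riesz kernel} must be uniform in the parameters.

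Next I would reduce the weighted scalar inequality for the modified operator to one for $\mathcal{T}_M^{\a+aj,\b+bj}$ itself against a shifted weight. Write $\gamma=\a+aj$, $\delta=\b+bj$. A direct computation with the measures shows that conjugation by $u_j$ turns the $d\mu_{\a,\b}$-problem into a $d\mu_{\gamma,\delta}$-problem: indeed $u_j(\t)^{r_0}\,d\mu_{\a,\b}(\t) = (\sin\frac{\t}{2})^{ajr_0+2\a+1}(\cos\frac{\t}{2})^{bjr_0+2\b+1}\,d\t$, and one checks that the extra powers of $\sin\frac{\t}{2}$ and $\cos\frac{\t}{2}$ coming from $d\mu_{\gamma,\delta}$ versus $d\mu_{\a,\b}$ can be absorbed into an $A_{r_0}^{\gamma,\delta}$ weight, because powers $(\sin\frac{\t}{2})^s(\cos\frac{\t}{2})^t$ lie in the appropriate Muckenhoupt class for a range of $s,t$ and, critically, have $A_{r_0}$ constants controlled uniformly once $a\ge1$ (or $a\ge2$) and $b=0$ or $b\ge1$. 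Thus the claim follows once we know: (i) $\mathcal{T}_M^{\gamma,\delta}$ is a Calder\'on--Zygmund operator on the space of homogeneous type $((0,\pi),d\mu_{\gamma,\delta},|\cdot|)$ with constants uniform in $\gamma,\delta$ in the relevant range — this is Theorem~\ref{Angular Riesz kernel} together with boundedness of $\mathcal{T}_M^{\gamma,\delta}$ on $L^2(d\mu_{\gamma,\delta})$, which is clear from the spectral definition since the multiplier $1/(n+\frac{\gamma+\delta+1}{2})$ is bounded and $\sqrt{\rho_M}$ is handled by the cancellation built into $\rho_M$ (with the extra hypothesis $\b>0$, $b\ge1$ exactly covering the $\S^d$ case where $\rho_{\S^d}=1/\sin^2\t$ has a singularity at both endpoints); and (ii) the general weighted theory of Calder\'on--Zygmund operators on spaces of homogeneous type, which gives $L^{r_0}(w)$-boundedness for $w\in A_{r_0}$ with constant depending only on the CZ constants and the $A_{r_0}$ constant. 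Combining (i), (ii), and the weight-shifting in the previous sentence produces the uniform-in-$j$ scalar bound, and hence the theorem.

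The extra factor of $j$ in front, absent in Theorem~\ref{Cor:Lp}, is the one genuinely new feature and is where most of the work sits. It must be reabsorbed using decay in $j$ hidden in $\mathcal{T}_M^{\gamma,\delta}$: the multiplier is $1/(n+\frac{\gamma+\delta+1}{2})$ with $\gamma+\delta\ge (a+b)j$ growing linearly in $j$, so $\mathcal{T}_M^{\gamma,\delta}$ maps with norm $O(1/j)$ on $L^2(d\mu_{\gamma,\delta})$ (more precisely its multiplier is bounded by $\frac{2}{\gamma+\delta+1}=O(1/j)$), which cancels the $j$. The analogous gain at the level of the kernel — i.e. that the CZ constants of $j\,\mathcal{T}_M^{\gamma,\delta}$ are bounded uniformly in $j$ — is precisely what the sharp, parameter-uniform estimates of Theorem~\ref{Angular Riesz kernel} are designed to deliver; I would invoke them for the kernel bound and the $L^2$-multiplier computation for the weak-type normalization, then feed the result into the weighted CZ theory. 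The main obstacle is therefore bookkeeping: verifying that every constant produced along the chain (the weight-shift $A_{r_0}^{\a,\b}\to A_{r_0}^{\gamma,\delta}$, the CZ constants, the $L^2$ norm, the $1/j$ gain) is genuinely uniform in $j$ and in $\gamma,\delta$, so that the final summation over $j,k$ converges and extrapolation applies cleanly. Once this is in place, the case-by-case restrictions on $a,b$ and the side condition for $\S^d$ drop out naturally as the hypotheses under which the auxiliary power weights and $\rho_M$ behave, and the proof concludes.
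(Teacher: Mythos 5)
Your overall architecture (uniform Calder\'on--Zygmund bounds for the individual operators, summation in $j,k$, then Rubio de Francia extrapolation in $p$) is the same as the paper's, but two of your intermediate steps are genuine gaps. First, your reduction by ``weight shifting'': you propose to conjugate by $u_j$ and transfer the problem to the space $((0,\pi),d\mu_{\gamma,\delta})$ with $\gamma=\a+aj$, $\delta=\b+bj$, absorbing the discrepancy between $u_j(\t)^{r_0}d\mu_{\a,\b}$ and $d\mu_{\gamma,\delta}$, together with an \emph{arbitrary} $w\in A_{r_0}^{\a,\b}$, into an $A_{r_0}^{\gamma,\delta}$ weight with constant uniform in $j$. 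Even for pure power weights this uniformity is delicate (the extra exponent $aj(r_0-2)$ is comparable to the dimension parameter $2\gamma+2\sim 2aj$ of the target space), and for a general $w\in A_{r_0}^{\a,\b}$ there is no reason the product should lie in $A_{r_0}^{\a+aj,\b+bj}$ at all, let alone with a $j$-independent constant. The paper avoids this entirely: the kernel estimates of Theorem \ref{Angular Riesz kernel} are deliberately phrased with $\mu_{\a,\b}(B(\t,|\t-\v|))$ --- the measure of the \emph{fixed} space $((0,\pi),d\mu_{\a,\b},|\cdot|)$ --- in the denominator, so that $ju_j\mathcal{T}_M^{\a+aj,\b+bj}(u_j^{-1}\cdot)$ is a Calder\'on--Zygmund operator on that one space of homogeneous type, uniformly in $j$, and the weighted theory is applied there with the single class $A_r^{\a,\b}$. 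No change of underlying measure, and hence no weight transference, is needed.

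Second, your claim that the $L^2$ boundedness of $\mathcal{T}_M^{\gamma,\delta}$ ``is clear from the spectral definition since the multiplier is bounded and $\sqrt{\rho_M}$ is handled by the cancellation built into $\rho_M$'' hides the hardest analytic point. The operator carries the unbounded prefactor $\sqrt{\rho_M(\t)}=1/\sin\frac{\t}{2}$ (or $1/\sin\t$), so the $O(1/j)$ decay of the multiplier $1/(n+\frac{\gamma+\delta+1}{2})$ does not by itself give a uniform $L^2$ bound for $j\,u_j\mathcal{T}_M^{\gamma,\delta}(u_j^{-1}\cdot)$: one must trade the factor $\a/\sin\frac{\t}{2}$ against the orthonormal systems. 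The paper does this in Lemma 4.2 via the Jacobi recurrence identity $\a P_n^{(\a,\b)}=(n+\a)P_n^{(\a-1,\b)}+\frac{n+\b}{2}(1-x)P_{n-1}^{(\a+1,\b)}$, which converts $\frac{\a}{\sin(\t/2)}\mathcal{P}_n^{(\a,\b)}$ into combinations living in $L^2(d\mu_{\a\mp1,\b})$ with uniformly bounded coefficients; the case $M=\S^d$ further requires a reflection $\t\mapsto\pi-\t$ and is exactly where the extra hypotheses $\b>0$, $b\ge1$ are used. Without this identity your chain has no uniform $L^2$ input for the Calder\'on--Zygmund machinery. A smaller point: fixing $r_0=2$ and extrapolating only proves the statement for $r=2$; since the inner exponent $r$ is baked into the pair $(f,g)$, you must run the weighted $L^r$ bound at the exponent $r$ of the statement (which the CZ theory does supply for every $1<r<\infty$) and then extrapolate in $p$.
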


It will be shown in Section \ref{Section:Kernels}
that the operators above are
Calder\'on--Zygmund operators with associated kernels
$u_j(\t)u_j(\v)\mathcal{K}^{\a+aj,\b+bj}(\t,\v)$ and $ju_j(\t)u_j(\v)T_M^{\a+aj,\b+bj}(\t,\v)$.
See Section \ref{Section:Kernels} for the precise definition of both kernels.
For them we have the following Calder\'on--Zygmund estimates,
whose proofs can be found in Subsections \ref{Section:KernelRiesz} and \ref{Sec:angular}.

\begin{thm}[Sharp estimates for the Jacobi--Riesz kernel]\label{Thm:Jacobi-Riesz kernel}
Let $\a,\b>-1/2$, $a\geq1$, $b=0$ or $b\geq1$, and $j\in\mathbb{N}_0$. 
Then
\begin{equation}
\label{eq:cresimiento}
|u_j(\t)u_j(\v)\mathcal{K}^{\a+aj,\b+bj}(\t,\v)|\le \frac{C_1}{\mu_{\a,\b}(B(\t,|\t-\v|))}, \quad \t\neq\v,
\end{equation}
and
\begin{equation}
\label{eq:suavidad}
|\nabla_{\t,\v}\big(u_j(\t)u_j(\v)\mathcal{K}^{\a+aj,\b+bj}(\t,\v)\big)|
\le \frac{C_2}{|\t-\v|\mu_{\a,\b}(B(\t,|\t-\v|))}, \quad \t\neq\v,
\end{equation}
with $C_1$ and $C_2$ independent of $j$, where $\mu_{\a,\b}(B(\t,|\t-\v|))$ is the $d\mu_{\a,\b}$-measure
of the interval with center $\t$ and radius $|\t-\v|$.
\end{thm}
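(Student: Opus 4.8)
The plan is to obtain the kernel $\mathcal{K}^{\a,\b}(\t,\v)$ of $\mathcal{R}^{\a,\b}$ from a subordination formula for $(\J^{\a,\b})^{-1/2}$ in terms of the Jacobi heat semigroup. Writing $(\J^{\a,\b})^{-1/2}=\tfrac{1}{\sqrt\pi}\int_0^\infty e^{-t\J^{\a,\b}}\,\tfrac{dt}{\sqrt t}$ and applying $\delta=\partial_\t$, the kernel of $\mathcal{R}^{\a,\b}$ is
$$
\mathcal{K}^{\a,\b}(\t,\v)=\tfrac{1}{\sqrt\pi}\int_0^\infty \partial_\t W_t^{\a,\b}(\t,\v)\,\tfrac{dt}{\sqrt t},
$$
where $W_t^{\a,\b}(\t,\v)$ is the Jacobi heat kernel. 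The first step is therefore to record sharp two-sided bounds for $W_t^{\a,\b}$ and its $\t$- and $\v$-derivatives with \emph{explicit dependence on the parameters}; these are available (Nowak--Sjögren and related sources) and essentially say that $W_t^{\a,\b}$ behaves like a Gaussian in the metric $|\t-\v|$ on the homogeneous space $((0,\pi),d\mu_{\a,\b})$, with polynomial corrections in $\t,\v$ near the endpoints. Inserting these bounds and integrating in $t$ gives the raw estimates $|\mathcal{K}^{\a,\b}(\t,\v)|\le C/\mu_{\a,\b}(B(\t,|\t-\v|))$ and the gradient estimate with the extra $|\t-\v|^{-1}$; the content of the theorem is that after multiplying by $u_j(\t)u_j(\v)$ and shifting $(\a,\b)\to(\a+aj,\b+bj)$, the constants can be taken \emph{uniform in $j$}.

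The second, and main, step is exactly this uniformity. The danger is that $\mu_{\a+aj,\b+bj}(B(\t,|\t-\v|))$ involves the weight $(\sin\tfrac\t2)^{2(\a+aj)+1}(\cos\tfrac\t2)^{2(\b+bj)+1}$, which for large $j$ concentrates mass near $\t=\pi$ (resp. is flat near $\t=0$ when $b=0$), while $u_j(\t)u_j(\v)=(\sin\tfrac\t2)^{aj}(\cos\tfrac\t2)^{bj}\cdot(\sin\tfrac\v2)^{aj}(\cos\tfrac\v2)^{bj}$ is designed to compensate. The plan is to track the $j$-dependent factors through the heat-kernel bounds carefully: the Gaussian factor $\exp(-c|\t-\v|^2/t)$ and the $t$-powers are $j$-independent up to the eigenvalue shift $((\a+\b+1+(a+b)j)/2)^2$ sitting in $\J$, which only \emph{helps} (it produces an extra decaying exponential $e^{-ct((a+b)j)^2}$ one can discard); the genuinely $j$-sensitive part is the ratio of endpoint weights at scale $t$, and one shows that
$$
u_j(\t)u_j(\v)\cdot\frac{(\text{weight of }d\mu_{\a+aj,\b+bj}\text{ at }\v)}{(\text{weight of }d\mu_{\a,\b}\text{ at }\v)}
$$
together with the Gaussian decay is bounded, uniformly in $j$, by a $j$-independent constant times $1/\mu_{\a,\b}(B(\t,|\t-\v|))$. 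This is where the hypotheses $a\ge1$, $b=0$ or $b\ge1$, and $\a,\b>-1/2$ are used: they guarantee that the compensating powers $u_j$ are strong enough, and that $\a+aj,\b+bj>-1/2$ so the underlying heat-kernel estimates apply throughout.

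For the gradient estimate \eqref{eq:suavidad} one differentiates inside the integral. Two kinds of terms appear: $\partial_\t$ (or $\partial_\v$) hitting $W_t^{\a,\b}$, which by the derivative heat-kernel bounds produces an extra factor $\min(1/\sqrt t,1/|\t-\v|)$ and hence, after the $t$-integration, the gain of $|\t-\v|^{-1}$; and $\partial_\t$ hitting the factor $u_j(\t)$, which yields $\tfrac{aj}{2}\cot\tfrac\t2\,u_j(\t)-\tfrac{bj}{2}\tan\tfrac\t2\,u_j(\t)$ times the size kernel. The latter is the place to be cautious, because of the explicit $j$: near the endpoints $\cot\tfrac\t2$ blows up, but there $u_j(\t)$ itself is exponentially small in $j$ (for instance $j\,(\sin\tfrac\t2)^{aj}\lesssim 1$ once $\t$ is a fixed distance from $0$, and for $\t$ near $0$ with $b=0$ the factor $\cot\tfrac\t2$ is tamed by $(\sin\tfrac\t2)^{aj}$ since $a\ge1$), so the product $j\cot\tfrac\t2\,u_j(\t)$ stays bounded by $C/\t\le C/|\t-\v|$ uniformly in $j$ — and symmetrically at $\t=\pi$ using $b\ge1$. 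I expect this $j\cot\tfrac\t2\,u_j$ bookkeeping near the endpoints, carried out uniformly in $j$ and matched against the measure $\mu_{\a,\b}(B(\t,|\t-\v|))$, to be the main technical obstacle; once it is done, \eqref{eq:cresimiento} and \eqref{eq:suavidad} follow by combining it with the subordinated heat-kernel bounds and splitting the $t$-integral at $t\sim|\t-\v|^2$ in the usual way.
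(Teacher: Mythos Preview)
Your approach differs from the paper's and has a real gap in the uniformity-in-$j$ step.

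The paper does not go through the heat semigroup. It uses the Poisson representation $(\J^{\a,\b})^{-1/2}=\int_0^\infty\P_t^{\a,\b}\,dt$ together with the explicit closed formula
\[
\K^{\a,\b}(\t,\v)=c_{\a,\b}\int_{-1}^1\!\!\int_{-1}^1\frac{(1-u^2)^{\a-1/2}(1-v^2)^{\b-1/2}}{(1-z)^{\a+\b+2}}\,\partial_\t(1-z)\,du\,dv,
\quad z=u\sin\tfrac\t2\sin\tfrac\v2+v\cos\tfrac\t2\cos\tfrac\v2.
\]
All of the $j$-uniformity is then extracted by repeated use of the elementary estimate
\[
\int_0^1\frac{(1-s)^{c+d-1/2}}{(A-Bs)^{c+d+\lambda+1/2}}\,ds\le \frac{\Gamma(d)\Gamma(\lambda)}{\Gamma(d+\lambda)}\,\frac{1}{A^{c+1/2}B^{d}(A-B)^{\lambda}}\qquad(d>0),
\]
applied with $d$ proportional to $j$; the resulting Gamma ratios are bounded via Stirling. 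The Jacobi \emph{heat} kernel has no comparably explicit formula, and pointwise bounds on it (and its derivatives) with constants tracked \emph{uniformly} as the type parameters tend to infinity are not available in the sources you cite; Nowak--Sj\"ogren work with the Poisson kernel precisely in order to have the formula above.

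More concretely, your handling of the term $[\partial_\t u_j(\t)]\,u_j(\v)\K^{\a+aj,\b+bj}(\t,\v)$ does not close. Writing it as $\big(\tfrac{aj}{2}\cot\tfrac\t2-\tfrac{bj}{2}\tan\tfrac\t2\big)\cdot u_j(\t)u_j(\v)\K^{\a+aj,\b+bj}(\t,\v)$ and invoking the size bound \eqref{eq:cresimiento} leaves the naked factor $aj\cot\tfrac\t2$ in front of $C_1/\mu_{\a,\b}(B(\t,|\t-\v|))$, which is not $\le C/|\t-\v|$ uniformly in $j$ (take $\t,\v$ near $\pi/2$ and let $j\to\infty$). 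If instead you try to keep $u_j(\t)$ unspent, your assertion that $j(\sin\tfrac\t2)^{aj}\lesssim1$ once $\t$ is a fixed distance from $0$ fails near $\t=\pi$, where $\sin\tfrac\t2\to1$; indeed $\sup_{j\ge1} j\cot\tfrac\t2\,(\sin\tfrac\t2)^{aj}\sim C/(\pi-\t)$ there, and your further inequality $C/\t\le C/|\t-\v|$ is also false in general. The paper avoids this by \emph{not} factoring through the size estimate: it goes back to the double integral and applies the lemma above with $d=aj-1$ instead of $d=aj$, trading one power of $B=\sin\tfrac\t2\sin\tfrac\v2$ for an extra factor $\Gamma(aj-1)/\Gamma(aj)\sim1/(aj)$ in the constant, which exactly cancels the explicit $aj$. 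There is no analogue of this cancellation available from Gaussian heat-kernel envelopes alone, so as written your plan cannot produce constants independent of $j$.
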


\begin{thm}[Sharp estimates for the kernel of $\mathcal{T}_M^{\a,\b}$]
\label{Angular Riesz kernel}
Let $\a, \b>-1/2$, $a=1$ or $a\geq2$, $b=0$ or $b\geq1$, $j\ge1$ and $u_j$ be as in \eqref{ujs}. Then
\begin{equation}
\label{eq:cresimiento angular}
|ju_j(\t)u_j(\v)T_M^{\a+aj,\b+bj}(\t,\v)|\le \frac{C_1}{\mu_{\a,\b}(B(\t,|\t-\v|))}, \quad \t\neq\v,
\end{equation}
\begin{equation}
\label{eq:suavidad angular}
|j\nabla_{\t,\v}(u_j(\t)u_j(\v)T_M^{\a+aj,\b+bj}(\t,\v))|\le \frac{C_2}{|\t-\v|\mu_{\a,\b}(B(\t,|\t-\v|))}, \quad \t\neq\v,
\end{equation}
with $C_1$ and $C_2$ independent of $j$.
\end{thm}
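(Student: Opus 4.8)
The plan is to reduce the kernel of $\mathcal{T}_M^{\a,\b}$ to something amenable to the negative-power-of-the-operator machinery and then extract the $j$-dependence carefully. First I would use the subordination formula
$$
(\J^{\a,\b})^{-1/2}=\frac{1}{\sqrt{\pi}}\int_0^\infty e^{-t\J^{\a,\b}}\,\frac{dt}{\sqrt{t}},
$$
so that the kernel of $\mathcal{T}_M^{\a+aj,\b+bj}$ is
$$
T_M^{\a+aj,\b+bj}(\t,\v)=\frac{\sqrt{\rho_M(\t)}}{\sqrt{\pi}}\int_0^\infty W_t^{\a+aj,\b+bj}(\t,\v)\,\frac{dt}{\sqrt{t}},
$$
where $W_t^{\gamma,\delta}$ is the Jacobi heat kernel (the kernel of $e^{-t\J^{\gamma,\delta}}$) with respect to $d\mu_{\gamma,\delta}$. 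The point is that one has good pointwise heat-kernel bounds with explicit control on the parameters: Gaussian-type upper bounds and gradient bounds for $W_t^{\gamma,\delta}$ in which the parameter enters only through the measure $d\mu_{\gamma,\delta}$ and through the spectral gap $\lambda_0^{\gamma,\delta}=\big(\tfrac{\gamma+\delta+1}{2}\big)^2$. I would first record these bounds (citing the heat-kernel estimates in \cite{Nowak-Sjogren Calderon} or re-deriving them), being explicit about the $j$-behaviour.

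The second step is the algebraic heart: rewrite $u_j(\t)u_j(\v)W_t^{\a+aj,\b+bj}(\t,\v)$ in terms of the base measure $d\mu_{\a,\b}$. Since $d\mu_{\a+aj,\b+bj}(\v)=u_j(\v)^2\,d\mu_{\a,\b}(\v)$ exactly when $(a,b)=(2,2)$, and more generally the weight $u_j^2$ converts one Jacobi measure into another, the product $u_j(\t)u_j(\v)W_t^{\a+aj,\b+bj}(\t,\v)$ is (up to the factor $\sqrt{\rho_M(\t)}$) a symmetric kernel with respect to $d\mu_{\a,\b}$. This is precisely the normalization that makes the Calderón–Zygmund estimates "with respect to $((0,\pi),d\mu_{\a,\b},|\cdot|)$" the natural target. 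Plugging the parameter-explicit heat bounds into the subordination integral, the $t$-integral produces the expected singularity: schematically
$$
|u_j(\t)u_j(\v)T_M^{\a+aj,\b+bj}(\t,\v)|\le C\,\sqrt{\rho_M(\t)}\int_0^\infty \frac{e^{-c\,|\t-\v|^2/t}\,e^{-c\,t\,j^2}}{\mu_{\a,\b}(B(\t,\sqrt t))}\,\frac{dt}{\sqrt t},
$$
and the region $t\gtrsim |\t-\v|^2$ gives the main contribution $\lesssim 1/\mu_{\a,\b}(B(\t,|\t-\v|))$ after using the doubling property of $d\mu_{\a,\b}$, while the factor $e^{-ctj^2}$ together with the singular weight $\sqrt{\rho_M(\t)}$ near $\t=0$ (or $\t=\pi$) is exactly what the extra $j$ out front in \eqref{eq:cresimiento angular} is designed to absorb. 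I would organize this by splitting into the "local" region $|\t-\v|\le \tfrac12\min\{\t,\pi-\t\}$ and the complementary "global" regions, handling the behaviour near the endpoints separately, since that is where the weight $\rho_M$ blows up and where the interplay between $u_j$, $\rho_M$, and the spectral factor $j^2$ is delicate.

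For the gradient estimate \eqref{eq:suavidad angular} I would differentiate under the subordination integral: $\nabla_{\t,\v}$ hitting $W_t^{\a+aj,\b+bj}$ gains a factor $\lesssim t^{-1/2}$ from the heat-kernel gradient bound, hitting $u_j(\t)$ or $u_j(\v)$ gains a factor $\lesssim j/\t$ (resp. $j/(\pi-\t)$), and hitting $\sqrt{\rho_M(\t)}$ gains $\lesssim 1/\t$ (resp. $1/(\pi-\t)$). Each of these, inserted into the $t$-integral and combined with the local condition $|\t-\v|\lesssim\t$, yields the extra $1/|\t-\v|$ with constants uniform in $j$; the $j/\t$ contributions are again controlled by the $e^{-ctj^2}$ factor exactly as in the size estimate, which is why the same extra power of $j$ suffices in \eqref{eq:suavidad angular}. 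The main obstacle I anticipate is not any single estimate but the bookkeeping near the endpoints: one must keep the dependence on $j$, on $t$, and on the distance to the boundary simultaneously explicit, and verify that the combination $\sqrt{\rho_M(\t)}\,u_j(\t)\,u_j(\v)$ together with the gap factor never produces a term growing in $j$. The constraints "$a=1$ or $a\ge2$, $b=0$ or $b\ge1$" (and the extra "$\b>0$, $b\ge1$" when $M=\S^d$, where $\rho_{\S^d}(\t)=1/\sin^2\t$ has singularities at both ends) are precisely the hypotheses under which this balancing works, so a careful case analysis according to which endpoint(s) $\rho_M$ is singular at, and whether $a=1$ or $a\ge2$, will be needed.
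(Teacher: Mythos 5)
Your outline (subordination to a semigroup, pointwise kernel bounds fed into the time integral, then a local/global case analysis near the endpoints) is a reasonable shape for an argument, but it conceals the entire content of the theorem inside an input that you assume rather than prove: Gaussian upper and gradient bounds for the Jacobi heat kernel $W_t^{\a+aj,\b+bj}$ with constants \emph{uniform in $j$}. No such parameter-uniform bounds are available in \cite{Nowak-Sjogren Calderon} (that paper works with a fixed pair of parameters and, like the present one, goes through the Poisson semigroup and an explicit product formula, not through Gaussian heat bounds), and ``re-deriving them, being explicit about the $j$-behaviour'' is at least as hard as the estimate you are trying to establish, since the type parameters $\a+aj,\b+bj$ blow up with $j$ and every constant in the known qualitative bounds degenerates with the parameter. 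A second step that is asserted but not proved is the passage from $\mu_{\a+aj,\b+bj}$-balls to $\mu_{\a,\b}$-balls: the fact that $u_j(\t)u_j(\v)W_t^{\a+aj,\b+bj}(\t,\v)$ is symmetric with respect to $d\mu_{\a,\b}$ does not give $u_j(\t)u_j(\v)\,\mu_{\a+aj,\b+bj}(B(\t,\sqrt t))^{-1}\lesssim\mu_{\a,\b}(B(\t,\sqrt t))^{-1}$ uniformly in $j$; this comparison is precisely where the hypotheses on $a$ and $b$ enter and it is the part your sketch defers to ``bookkeeping.'' Finally, the claim that the spectral factor $e^{-ctj^2}$ absorbs both the prefactor $j$ and the singularity $\sqrt{\rho_M(\t)}\sim 1/\t$ is only quantified in the local regime: using $je^{-ctj^2}\lesssim t^{-1/2}$ and $t\gtrsim|\t-\v|^2$ converts the extra $j$ into $|\t-\v|^{-1}$, which controls $j/\t$ only when $|\t-\v|\lesssim\t$; in the actual proof the factor $1/\sin\frac{\t}{2}$ is cancelled against a power of $\sin\frac{\t}{2}$ coming from $u_j(\t)$ (this is the role of $a\ge1$), not against the spectral gap.

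For contrast, the paper never touches the heat semigroup. It uses the closed form \eqref{springer} for $\int_0^\infty\P_t^{\a,\b}(\t,\v)\,dt$ --- a double integral over $[-1,1]^2$ of $(1-z)^{-(\a+\b+1)}$ against $d\Pi_\a\times d\Pi_\b$ --- multiplies by $j\,u_j(\t)u_j(\v)/\sin\frac{\t}{2}$ as in \eqref{kernel T}, and then applies Lemma \ref{lem:0} once in $u$ and once in $v$ with $d=aj$ and $d=bj$; the factor $B^{-d}=(\sin\frac{\t}{2}\sin\frac{\v}{2})^{-aj}(\cos\frac{\t}{2}\cos\frac{\v}{2})^{-bj}$ produced by the lemma cancels $u_j(\t)u_j(\v)$ exactly, and the Gamma-function ratios are controlled via \eqref{gambas} so that all constants, including the extra $j$, stay bounded; the case $M=\S^d$ is reduced to $M\neq\S^d$ by splitting $(0,\pi)$ at $\pi/2$ and reflecting $\t\mapsto\pi-\t$. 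To salvage your route you would first have to prove the uniform-in-$j$ heat kernel bounds, which essentially amounts to redoing this explicit analysis for $e^{-t\J^{\a+aj,\b+bj}}$; the direct computation from \eqref{springer}, parallel to the proof of Theorem \ref{Thm:Jacobi-Riesz kernel}, is shorter.
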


\begin{proof}[Proof of Theorems \ref{Cor:Lp} and \ref{Thm:Lp angular}.] Let $\mathcal{S}_j$, $j\geq1$, be either
the operator $u_j\mathcal{R}^{\a+aj,\b+bj}(u_j^{-1}\cdot)$, or the operator $ju_j\mathcal{T}_M^{\a+aj,\b+bj}(u_j^{-1}\cdot)$, for $\a,\b>-1/2$.
By Theorems \ref{Thm:Jacobi-Riesz kernel} and  \ref{Angular Riesz kernel} and the Calder\'on--Zygmund theory
for spaces of homogeneous type (see \cite{Calderon} and \cite{RT}, also \cite{RRT}), we see that $\mathcal{S}_j$ is bounded in
$L^r(w\,d\mu_{\a,\b})$, for $1<r<\infty$ and all $w\in A_r^{\a,\b}$, uniformly in $j\geq0$.
That is, there exists a constant
$C$ independent of $j$ such that
$$
\int_0^{\pi}|\mathcal{S}_jf(\t)|^rw(\t)\,d\mu_{\a,\b}(\t)\le C \int_0^{\pi}|f(\t)|^rw(\t)\,d\mu_{\a,\b}(\t).
$$
In particular, for any sequence of functions $f_{j,k}$ in $L^r(w\,d\mu_{\a,\b})$, we have
\begin{equation}
\label{eq:inequality con sumas}
\int_0^{\pi}\sum_{j=0}^{\infty}\sum_{k=0}^{\infty}|\mathcal{S}_jf_{j,k}(\t)|^rw(\t)\,d\mu_{\a,\b}(\t)\le C \int_0^{\pi}\sum_{j=0}^{\infty}\sum_{k=0}^{\infty}|f_{j,k}(\t)|^rw(\t)\,d\mu_{\a,\b}(\t),
\end{equation}
for all $w\in A_r^{\a,\b}$. Now, in the extrapolation theorem above we make the following choices:
$$
g=\left(\sum_{j=0}^{\infty}\sum_{k=0}^{\infty}|\mathcal{S}_jf_{j,k}|^r\right)^{1/r},\qquad
 f=\left(\sum_{j=0}^{\infty}\sum_{k=0}^{\infty}|f_{j,k}|^r\right)^{1/r}.
$$
With this pair $(f,g)$, the inequality \eqref{eq:inequality con sumas} is just the hypothesis of Theorem
\ref{th:extra-RF}. Therefore, for any $1<p<\infty$ and all $w\in A_p^{\a,\b}$,
$$
\int_{0}^{\pi}\left(\sum_{j=0}^{\infty}\sum_{k=0}^{\infty}|\mathcal{S}_jf_{j,k}|^r\right)^{p/r}w(\t)\,d\mu_{\a,\b}(\t)\le
C\int_0^{\pi}\left(\sum_{j=0}^{\infty}\sum_{k=0}^{\infty}|f_{j,k}|^r\right)^{p/r}w(\t)\,d\mu_{\a,\b}(\t).
$$
\end{proof}

\section{Kernel estimates}\label{Section:Kernels}

Let us consider the
 Poisson semigroup $\P_t^{\a,\b}$ related to $\J^{\a,\b}$. This operator is initially defined in $L^2(d\mu_{\a,\b})$ as
$$\P_t^{\a,\b}f(\t)=\sum_{n=0}^\infty e^{-t\left|n+\frac{\a+\b+1}{2}\right|}c_n^{\a,\b}(f)\mathcal{P}_n^{(\a,\b)}(\theta),\quad t>0.$$
The operator semigroup $\{\P_t^{\a,\b}\}_{t>0}$ can be written as an integral operator
$$\P_t^{\a,\b}f(\t)=\int_0^{\pi}\P_t^{\a,\b}(\t,\v)f(\v)\,d\mu_{\a,\b}(\v),$$
where the Jacobi--Poisson kernel is given by
\begin{equation}\label{serie}
\P_t^{\a,\b}(\t,\v)=\sum_{n=0}^{\infty}e^{-t\left|n+\frac{\a+\b+1}{2}\right|}\mathcal{P}_n^{(\a,\b)}(\theta)\mathcal{P}_n^{(\a,\b)}(\v).
\end{equation}
We need a more explicit expression of this kernel when $\a,\b>-1/2$. Let
$$
z\equiv z(u,v,\t,\v):=u\sin\tfrac{\t}{2}\sin\tfrac{\v}{2}+v\cos\tfrac{\t}{2}\cos\tfrac{\v}{2}, \quad \t,\v\in(0,\pi),~u,v\in[-1,1].
$$
Consider on $[-1,1]$ the measure
\begin{equation*}
d\Pi_\a(u)=\frac{\Gamma(\a+1)}{\sqrt{\pi}\Gamma(\a+1/2)}(1-u^2)^{\a-1/2}\,du,
\end{equation*}
(the analogous definition for $d\Pi_\b(v)$).
The expression for the Jacobi--Poisson kernel is (see \cite{Nowak-Sjogren Calderon})
\begin{equation*}
    \P_t^{\a,\b}(\t,\v)= \frac{\Gamma(\a+\b+2)}{2^{\a+\b+1}\Gamma(\a+1)\Gamma(\b+1)}
\int_{-1}^1\int_{-1}^1\frac{\sinh \tfrac{t}{2}} {\left(\cosh\tfrac{t}{2}-1+(1-z)\right)^{\a+\b+2}}
    \,d\Pi_\a(u)\,d\Pi_\b(v).
\end{equation*}
The following identity is easy to check:
\begin{equation}\label{springer}
\int_0^\infty \P_t(\t,\v)\,dt= \frac{\Gamma(\a+\b+1)}{\pi2^{\a+\b+2}
\Gamma(\a+1/2)\Gamma(\b+1/2)}\int_{-1}^1\int_{-1}^1
\frac{(1-u^2)^{\a-1/2}(1-v^2)^{\b-1/2}} {\left(1-z\right)^{\a+\b+1}}\,du\,dv,
\end{equation}

\subsection{Kernel estimates for $\mathcal{R}^{\a,\b}$}\label{Section:KernelRiesz}

Recall the definition of Jacobi--Riesz transform $\mathcal{R}^{\a,\b}$
 given in Subsection \ref{definicion jacobiana} above.
Now, see \cite{Nowak-Sjogren Calderon}, for a compactly supported smooth
function $f$, we have
$$\mathcal{R}^{\a,\b}f(\theta)=\int_0^\infty\delta\P_t^{\a,\b}f(\theta)\,dt= \int_{0}^{\pi}\K^{\a,\b}(\t,\v)f(\varphi)\,d\mu_{\a,\b}(\varphi),$$
for $\t$ outside the support of $f$. The kernel is given by
$$\K^{\a,\b}(\t,\v)=\int_0^{\infty} \delta \mathcal{P}_t^{\a,\b}(\t,\v)\,dt.$$
Also, $\K^{\a,\b}(\t,\v)$ satisfies standard Calder\'on--Zygmund estimates on the space of homogeneous
type $((0,\pi),|\cdot|,d\mu_{\a,\b})$. Then we get
\begin{equation}\label{kernel}
\K^{\a,\b}(\t,\v)=\frac{\Gamma(\a+\b+2)}{\pi2^{\a+\b+1}\Gamma(\a+1/2)\Gamma(\b+1/2)}\int_{-1}^1
\int_{-1}^1\frac{(1-u^2)^{\a-1/2}(1-v^2)^{\b-1/2}}{(1-z)^{\a+\b+2}}\,\partial_\t (1-z)\,du\,dv,
\end{equation}
where
\begin{equation}
\label{deriv z}
\partial_{\t} (1-z)=\tfrac12\sin\tfrac{\t-\v}{2}+\tfrac{1-u}{2}\cos\tfrac{\t}{2}\sin\tfrac{\v}{2}-\tfrac{1-v}{2}\sin\tfrac{\t}{2}\cos\tfrac{\v}{2}.
\end{equation}

In the same way, the operator $u_j\mathcal{R}^{\a+aj,\b+bj}(u_j^{-1}f)(\t)$,
with $u_j$ as in \eqref{ujs},
can be defined by using \eqref{definicion obvia}
and it can be expressed as an integral operator in the Calder\'on--Zygmund sense, with associated kernel
$u_j(\t)u_j(\v)\mathcal{K}^{\a+aj,\b+bj}(\t,\v)$. Let us establish this as a lemma.

\begin{lem}\label{CZ kernel associated}
Let $\a,\b>-1/2$, $a,b\ge1$ and $u_j$ be as in \eqref{ujs}. Take $f,g\in C_c^{\infty}(0,\pi)$ having disjoint supports.
Then,
$$
\langle u_j\mathcal{R}^{\a+aj,\b+bj}(u_j^{-1}f),g\rangle_{d\mu_{\a,\b}}=\int_0^\pi\int_0^{\pi}u_j(\t)u_j(\v)\mathcal{K}^{\a+aj,\b+bj}(\t,\v)f(\v)\overline{g(\t)}\,d\mu_{\a,\b}(\v)\,d\mu_{\a,\b}(\t).
$$
\end{lem}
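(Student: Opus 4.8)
The identity to be proved is a change-of-spectral-multiplier statement: the operator $u_j\mathcal{R}^{\a+aj,\b+bj}(u_j^{-1}\,\cdot)$, a priori defined on $L^2(d\mu_{\a,\b})$ via the series \eqref{definicion obvia}, coincides away from the diagonal with integration against $u_j(\t)u_j(\v)\mathcal{K}^{\a+aj,\b+bj}(\t,\v)$. The natural route is to reduce everything to the known statement for $\mathcal{R}^{\a',\b'}$ itself (with $\a'=\a+aj$, $\b'=\b+bj$), which was recalled just before the lemma from \cite{Nowak-Sjogren Calderon}: for $\tilde f,\tilde g\in C_c^\infty(0,\pi)$ with disjoint supports, $\langle \mathcal{R}^{\a',\b'}\tilde f,\tilde g\rangle_{d\mu_{\a',\b'}}=\int\!\!\int \mathcal{K}^{\a',\b'}(\t,\v)\tilde f(\v)\overline{\tilde g(\t)}\,d\mu_{\a',\b'}(\v)\,d\mu_{\a',\b'}(\t)$.

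The first step is to relate the two measures. By the defining formula $d\mu_{\a,\b}(\t)=(\sin\tfrac\t2)^{2\a+1}(\cos\tfrac\t2)^{2\b+1}\,d\t$ and \eqref{ujs}, one checks directly that $u_j(\t)^2\,d\mu_{\a,\b}(\t)=d\mu_{\a+aj,\b+bj}(\t)$ precisely when $a=b=2$; for general $a,b\ge1$ there is instead a smooth positive weight relating them, but the cleaner bookkeeping is to absorb the $u_j$'s as follows. Write $\phi:=u_j^{-1}f$ and $\psi:=u_j^{-1}g$; since $u_j$ is smooth and strictly positive on $(0,\pi)$ and $f,g$ are compactly supported in $(0,\pi)$, the functions $\phi,\psi$ again lie in $C_c^\infty(0,\pi)$ and have the same (disjoint) supports as $f,g$. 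Then $\langle u_j\mathcal{R}^{\a',\b'}(u_j^{-1}f),g\rangle_{d\mu_{\a,\b}}=\int_0^\pi \mathcal{R}^{\a',\b'}\phi(\t)\,\overline{g(\t)}\,u_j(\t)\,d\mu_{\a,\b}(\t)$, and one rewrites $u_j(\t)\overline{g(\t)}\,d\mu_{\a,\b}(\t)$ in terms of $d\mu_{\a',\b'}$: concretely, $u_j(\t)^2\,d\mu_{\a,\b}(\t)=(\sin\tfrac\t2)^{(a-2)j}(\cos\tfrac\t2)^{(b-2)j}\cdot d\mu_{\a',\b'}(\t)$, so that the pairing becomes $\langle \mathcal{R}^{\a',\b'}\phi,\,h\psi\rangle_{d\mu_{\a',\b'}}$ for an explicit smooth positive $h$ — or, more transparently, keep the weights on the outside and apply the integral representation of $\mathcal{R}^{\a',\b'}$ directly to $\phi$ evaluated at $\t$ outside $\operatorname{supp}\phi$.

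The second and main step is to invoke the representation of $\mathcal{R}^{\a',\b'}$ recalled above: for $\t\notin\operatorname{supp}\phi$ (equivalently $\t\notin\operatorname{supp}f$), $\mathcal{R}^{\a',\b'}\phi(\t)=\int_0^\pi \mathcal{K}^{\a',\b'}(\t,\v)\phi(\v)\,d\mu_{\a',\b'}(\v)$. Substituting $\phi(\v)=u_j(\v)^{-1}f(\v)$ and $d\mu_{\a',\b'}(\v)=u_j(\v)^2(\sin\tfrac\v2)^{-(a-2)j}(\cos\tfrac\v2)^{-(b-2)j}d\mu_{\a,\b}(\v)$, one pulls out $u_j(\v)$ so that the integrand carries exactly one factor $u_j(\v)$, and similarly the outer integration in $\t$ against $\overline{g(\t)}$ produces the factor $u_j(\t)$ after converting $d\mu_{\a',\b'}(\t)$ back to $d\mu_{\a,\b}(\t)$; the residual powers $(\sin\tfrac\t2)^{(a-2)j}(\cos\tfrac\t2)^{(b-2)j}$ and their $\v$-counterparts recombine into the single prefactor, and one is left with the asserted kernel $u_j(\t)u_j(\v)\mathcal{K}^{\a+aj,\b+bj}(\t,\v)$ integrated against $f(\v)\overline{g(\t)}\,d\mu_{\a,\b}(\v)\,d\mu_{\a,\b}(\t)$. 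Fubini is legitimate since $f,g$ have disjoint compact supports in $(0,\pi)$, so the kernel is bounded on $\operatorname{supp}g\times\operatorname{supp}f$ and all integrals are over compact sets.

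The only genuine subtlety — the step I expect to be the main obstacle — is the passage from the \emph{spectral} definition \eqref{definicion obvia} of $u_j\mathcal{R}^{\a',\b'}(u_j^{-1}\cdot)$ to the \emph{integral} representation, i.e. the justification that the series and the $\int_0^\infty\delta\mathcal{P}_t^{\a',\b'}\,dt$ representation agree when tested against $g$ with $\operatorname{supp}g$ disjoint from $\operatorname{supp}f$. This is exactly the content recalled from \cite{Nowak-Sjogren Calderon} for $\mathcal{R}^{\a',\b'}$, so the work reduces to observing that conjugation by the smooth strictly positive multiplier $u_j$ on a fixed compact region preserves both the $L^2$-identity and the off-diagonal integral identity; once that observation is in place, the rest is the elementary measure-bookkeeping above. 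I would therefore state the lemma's proof as: reduce to $\phi=u_j^{-1}f$, $\psi=u_j^{-1}g$; apply the known representation of $\mathcal{R}^{\a+aj,\b+bj}$ from \cite{Nowak-Sjogren Calderon}; and rearrange the weight factors using $d\mu_{\a,\b}$ and \eqref{ujs}.
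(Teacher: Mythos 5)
Your overall route is sound and genuinely different from the paper's: the paper proves the lemma by expanding $u_j^{-1}f$ in the basis $\{\P_n^{(\a+aj,\b+bj)}\}$, applying the spectral definition \eqref{definicion obvia} term by term, and interchanging summation and integration (legitimate because $f$ and $g$ have disjoint supports), whereas you reduce everything to the off-diagonal integral representation of $\mathcal{R}^{\a+aj,\b+bj}$ quoted from \cite{Nowak-Sjogren Calderon} and then do measure bookkeeping. Your reduction is shorter and correctly isolates the one nontrivial input (the passage from the spectral to the integral representation), at the price of leaning on the cited result as a black box; the paper's series computation is self-contained but implicitly reproves that same off-diagonal identity.

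However, your bookkeeping contains an arithmetic slip that, taken literally, would break the computation. From \eqref{ujs} and the definition of $d\mu_{\a,\b}$ one has
\begin{equation*}
u_j(\t)^2\,d\mu_{\a,\b}(\t)=(\sin\tfrac{\t}{2})^{2\a+2aj+1}(\cos\tfrac{\t}{2})^{2\b+2bj+1}\,d\t=d\mu_{\a+aj,\b+bj}(\t)
\end{equation*}
for \emph{all} $a,b$ and $j$, not only for $a=b=2$. The spurious correction factors $(\sin\tfrac{\t}{2})^{(a-2)j}(\cos\tfrac{\t}{2})^{(b-2)j}$ you insert do not cancel against anything, and your assertion that they ``recombine into the single prefactor'' is unjustified: if they were really present, the resulting kernel would not be $u_j(\t)u_j(\v)\mathcal{K}^{\a+aj,\b+bj}(\t,\v)$. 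With the identity corrected, the argument becomes a two-line computation: for $\t\notin\operatorname{supp}f$,
\begin{equation*}
\mathcal{R}^{\a+aj,\b+bj}(u_j^{-1}f)(\t)=\int_0^\pi\mathcal{K}^{\a+aj,\b+bj}(\t,\v)\,u_j(\v)^{-1}f(\v)\,u_j(\v)^2\,d\mu_{\a,\b}(\v)
=\int_0^\pi\mathcal{K}^{\a+aj,\b+bj}(\t,\v)\,u_j(\v)f(\v)\,d\mu_{\a,\b}(\v),
\end{equation*}
and multiplying by $u_j(\t)\overline{g(\t)}$ and integrating in $d\mu_{\a,\b}(\t)$ gives the claim directly (no conversion of the outer measure is needed, since the factor $u_j(\t)$ is already supplied by the operator). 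Please fix the measure identity and delete the correction factors; the rest of your reduction, including the identification of the Nowak--Sj\"ogren representation as the essential input and the Fubini justification via disjoint compact supports, is correct.
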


The proof of Lemma \ref{CZ kernel associated} follows in a standard way just by writing the left hand side as a Fourier series and checking that it coincides, after changing the order of summation and integration
(which can be done because $f$ and $g$ have disjoint supports), with the right hand side.

Now that everything is defined, we are almost ready to prove the sharp growth and smoothness estimates for the kernel $u_j(\t)u_j(\v)\mathcal{K}^{\a+aj,\b+bj}(\t,\v)$ stated in Theorem \ref{Thm:Jacobi-Riesz kernel}.

We will use the following lemma, whose proof can be found, for example, in \cite{Ciaurri-Roncal}.

\begin{lem}\label{lem:0}
Let $c>-1/2$, $0<B<A$, $\lambda>0$ and $d\ge0$. Then
$$
\int_0^1\frac{(1-s)^{c+d-1/2}}{(A-Bs)^{c+d+\lambda+1/2}}\,ds\le \frac{C(d)}{A^{c+1/2}B^{d}(A-B)^{\lambda}},
$$
where
$\displaystyle
C(d)=\begin{cases}
\frac{\Gamma(d)\Gamma(\lambda)}{\Gamma(d+\lambda)}, & d>0,\\[2pt]
C(c), &d=0.
\end{cases}
$
\end{lem}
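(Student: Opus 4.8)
The plan is to reduce the integral to an incomplete Beta integral by two explicit changes of variable, and then bound that incomplete Beta integral by the complete one; the constant $\Gamma(d)\Gamma(\lambda)/\Gamma(d+\lambda)$ will then appear for free.

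First I would apply $s\mapsto 1-s$ and set $D:=A-B>0$, so that the integral becomes $\int_0^1 t^{c+d-1/2}(D+Bt)^{-(c+d+\lambda+1/2)}\,dt$; note that $c+d-1/2>-1$ and $c+d+1/2>0$, so there are no convergence issues. The key move is then the rational substitution $u=\frac{Bt}{D+Bt}$, i.e. $t=\frac{D}{B}\cdot\frac{u}{1-u}$, which is an increasing bijection of $(0,1)$ onto $(0,B/A)$ with $D+Bt=\frac{D}{1-u}$ and $dt=\frac{D}{B(1-u)^2}\,du$. Collecting the powers of $D$, $B$, $u$, and $1-u$, one finds
$$\int_0^1\frac{(1-s)^{c+d-1/2}}{(A-Bs)^{c+d+\lambda+1/2}}\,ds=\frac{1}{B^{c+d+1/2}(A-B)^{\lambda}}\int_0^{B/A}u^{c+d-1/2}(1-u)^{\lambda-1}\,du.$$

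When $d>0$ I would finish by splitting $u^{c+d-1/2}=u^{c+1/2}\,u^{d-1}$, using $u^{c+1/2}\le(B/A)^{c+1/2}$ on the range of integration (this is where $c>-1/2$ enters), and enlarging the remaining integral to $[0,1]$:
$$\int_0^{B/A}u^{c+d-1/2}(1-u)^{\lambda-1}\,du\le\Big(\tfrac{B}{A}\Big)^{c+1/2}\int_0^1 u^{d-1}(1-u)^{\lambda-1}\,du=\Big(\tfrac{B}{A}\Big)^{c+1/2}\frac{\Gamma(d)\Gamma(\lambda)}{\Gamma(d+\lambda)}.$$
Substituting back yields exactly the claimed inequality with $C(d)=\Gamma(d)\Gamma(\lambda)/\Gamma(d+\lambda)$.

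The only point requiring care is the degenerate case $d=0$, where $u^{d-1}=u^{-1}$ fails to be integrable at the origin and the previous step breaks down. Here I would instead estimate $\int_0^{x}u^{c-1/2}(1-u)^{\lambda-1}\,du$, with $x=B/A\in(0,1)$, by splitting at $x=\tfrac12$: for $x\le\tfrac12$ bound $(1-u)^{\lambda-1}$ by a constant and integrate $u^{c-1/2}$ to get $\lesssim x^{c+1/2}$; for $x>\tfrac12$ enlarge to $[0,1]$ to obtain the finite number $\Gamma(c+\tfrac12)\Gamma(\lambda)/\Gamma(c+\lambda+\tfrac12)$ and use $x^{c+1/2}\ge 2^{-(c+1/2)}$. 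Either way $\int_0^{x}u^{c-1/2}(1-u)^{\lambda-1}\,du\le C(c)\,x^{c+1/2}$ for a constant depending only on $c$ and $\lambda$, which is the constant called $C(0)$ in the statement. I do not anticipate any real obstacle: the whole proof hinges on spotting the substitution $u=Bt/(D+Bt)$, after which the estimate becomes a one-line comparison of an incomplete Beta function with the complete one. (A more hands-on approach — splitting $[0,1]$ according to whether $Bt\lessgtr D$ — also works but introduces logarithmic losses when $B\asymp A-B$ unless one carefully keeps a factor $(D+Bt)^{\lambda}$ in play, so the substitution is cleaner.)
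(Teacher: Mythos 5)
Your proof is correct and complete. Note that the paper does not actually prove this lemma; it only states it and refers the reader to the preprint \cite{Ciaurri-Roncal} for a proof, so there is no in-paper argument to compare against line by line. Your reduction is the natural one: after $s\mapsto 1-s$ the substitution $u=Bt/(D+Bt)$ turns the integral \emph{exactly} into
$B^{-(c+d+1/2)}(A-B)^{-\lambda}\int_0^{B/A}u^{c+d-1/2}(1-u)^{\lambda-1}\,du$
(I checked the exponent bookkeeping: the powers of $D$, $B$ and $1-u$ come out to $-\lambda$, $-(c+d+1/2)$ and $\lambda-1$ respectively), and then peeling off $u^{c+1/2}\le(B/A)^{c+1/2}$ and enlarging to the full Beta integral gives precisely $\Gamma(d)\Gamma(\lambda)/\Gamma(d+\lambda)$ for $d>0$. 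Your separate treatment of $d=0$ is also right, and you correctly identify the one place where the naive step fails ($u^{d-1}$ non-integrable at $0$); the split at $x=1/2$ gives $\int_0^x u^{c-1/2}(1-u)^{\lambda-1}\,du\le C\,x^{c+1/2}$ with $C$ depending on $c$ and $\lambda$ — worth noting that the statement's notation $C(c)$ suppresses the $\lambda$-dependence, which is harmless here since $\lambda$ is fixed in every application with $d=0$. The approach in the cited reference is essentially equivalent (it reduces to an incomplete integral of $w^{c+d-1/2}(1+w)^{-(c+d+\lambda+1/2)}$ after writing $A-Bs=(A-B)+B(1-s)$, which your substitution maps onto), so this is the same argument in slightly different coordinates rather than a genuinely different route.
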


For $\a,\b>-1/2$ we can readily check that
\begin{equation}
\label{lem:medida bolas}
\mu_{\a,\b}(B(\t,|\t-\v|))\simeq|\t-\v|(\t+\v)^{2\a+1}(\pi-\t+\pi-\v)^{2\b+1}, \qquad \t,\v\in(0,\pi).
\end{equation}
We will often use the following well known fact \cite[eq.~6.1.46]{Abra}
\begin{equation}
\label{gambas}
\frac{\Gamma(z+r)}{\Gamma(z+t)}\simeq z^{r-t}, \quad z>0,~r,t\in \Real.
\end{equation}
The next estimate is easy to see. For $\eta>0$ and $\gamma\ge1/2$, we have
\begin{equation}
\label{function h}
(1-r)^\eta r^{\gamma-1/2}\le \left(\frac{\eta}{\eta+\gamma-1/2}\right)^\eta,\quad\hbox{when}~0<r<1.
\end{equation}
Finally, we collect several trigonometric identities that will be used throughout the proofs:
\begin{align*}
1-\cos\tfrac{\t}{2}\cos\tfrac{\v}{2}&=\big(\sin\tfrac{\t-\v}{4}\big)^2+\big(\sin\tfrac{\t+\v}{4}\big)^2\simeq \t^2+\v^2\\
1-\sin\tfrac{\t}{2}\sin\tfrac{\v}{2}&=\big(\sin\tfrac{\t-\v}{4}\big)^2+\big(\cos\tfrac{\t+\v}{4}\big)^2\simeq (\pi-\t)^2+(\pi-\v)^2\\
1-\cos\tfrac{\t}{2}\cos\tfrac{\v}{2}-\sin\tfrac{\t}{2}\sin\tfrac{\v}{2}&=1-\cos\tfrac{\t-\v}{2}=2\big(\sin\tfrac{\t-\v}{4}\big)^2\simeq (\t-\v)^2.
\end{align*}

\begin{proof}[Proof of Theorem \ref{Thm:Jacobi-Riesz kernel}]
We have to prove \eqref{eq:cresimiento} and \eqref{eq:suavidad} for $j\geq1$.

\noindent\textbf{Growth estimates: proof of \eqref{eq:cresimiento}.}
By taking into account \eqref{kernel} and \eqref{deriv z},
\begin{equation*}
u_j(\t)u_j(\v)\K^{\a+aj,\b+bj}(\t,\v) =\frac{\Gamma(\a+aj+\b+bj+2)u_j(\t)u_j(\v)}{\pi\Gamma(\a+aj+1/2)\Gamma(\b+bj+1/2)2^{\a+aj+\b+bj+2}}
(J_1+J_2-J_3),
\end{equation*}
where
\begin{equation*}
J_1:=\sin\tfrac{\t-\v}{2}\int_{-1}^1\int_{-1}^1\frac{(1-u^2)^{\a+aj-1/2}(1-v^2)^{\b+bj-1/2}}{(1-z)^{\a+aj+\b+bj+2}}\,du\,dv
\end{equation*}
\begin{equation*}
J_2:=\cos\tfrac{\t}{2}\sin\tfrac{\v}{2}\int_{-1}^1\int_{-1}^1\frac{(1-u)(1-u^2)^{\a+aj-1/2}(1-v^2)^{\b+bj-1/2}}{(1-z)^{\a+aj+\b+bj+2}}\,du\,dv
\end{equation*}
\begin{equation*}
J_3:=\sin\tfrac{\t}{2}\cos\tfrac{\v}{2}\int_{-1}^1\int_{-1}^1\frac{(1-u^2)^{\a+aj-1/2}(1-v)(1-v^2)^{\b+bj-1/2}}{(1-z)^{\a+aj+\b+bj+2}}\,du\,dv.
\end{equation*}

Let us proceed with the estimates of each term.

\noindent\textbf{Estimate for $J_1$.}
Observe that
$$J_1\leq 2^{1+\a+aj+\b+bj}\sin\tfrac{\t-\v}{2}\int_0^1\int_0^1\frac{(1-u)^{\a+aj-1/2}(1-v)^{\b+bj-1/2}}
{(1-z)^{\a+aj+\b+bj+2}}\,du\,dv.$$
We apply twice Lemma \ref{lem:0}, first in the integral in $u$, taking $c=\a, d=aj, \lambda=\b+bj+3/2, A=1-v\cos\tfrac{\t}{2}\cos\tfrac{\v}{2}$ and $B=\sin\tfrac{\t}{2}\sin\tfrac{\v}{2}$, and then in the integral in $v$, taking $c=\b, d=bj, \lambda=1, A=1-\sin\frac{\t}{2}\sin\frac{\v}{2}$ and $B=\cos\tfrac{\t}{2}\cos\tfrac{\v}{2}$. We get
\begin{align*}
\Big|&\frac{\Gamma(\a+aj+\b+bj+2)u_j(\t)u_j(\v)}{\Gamma(\a+aj+1/2)\Gamma(\b+bj+1/2)2^{\a+aj+\b+bj+2}}J_1\Big|\\
&\le \frac{\Gamma(\a+aj+\b+bj+2)\Gamma(aj)\Gamma(\b+bj+3/2)\Gamma(bj)\Gamma(1)}{\Gamma(\a+aj+1/2)\Gamma(\b+bj+1/2)\Gamma(aj+\b+bj+3/2)\Gamma(bj+1)}\frac{C}{\mu_{\a,\b}(B(\t,|\t-\v|))}.
\end{align*}
This and \eqref{gambas} give \eqref{eq:cresimiento} for $J_1$.

\noindent\textbf{Estimate for $J_2$.}
As in the first step to estimate $J_1$, one passes the integration
 in $v$ from $(-1,1)$ to $(0,1)$ and uses that $1-v^2\leq 2(1-v)$ in the numerator.
Observe that $\cos\tfrac{\t}{2}\sin\tfrac{\v}{2}\le \sin\tfrac{\v}{2}\sim \tfrac{\v}{2}\le \tfrac{\t+\v}{2}$. Then, by applying Lemma \ref{lem:0} to the remaining integral in $v$ with $c=\b, d=bj, \lambda=\a+aj+3/2, A=1-u\sin\tfrac{\t}{2}\sin\tfrac{\v}{2}$ and $B=\cos\tfrac{\t}{2}\cos\tfrac{\v}{2}$ we have that $J_2$ is bounded by
\begin{multline*}
\frac{\Gamma(bj)\Gamma(\a+aj+3/2)2^{\b+bj-1/2}}{\Gamma(\a+aj+bj+3/2)}\frac{\t+\v}{(\cos\tfrac{\t}{2}\cos\tfrac{\v}{2})^{bj}}\\
\times\int_{-1}^1\frac{(1-u)(1-u^2)^{\a+aj-1/2}}{(1-u\sin\tfrac{\t}{2}\sin\tfrac{\v}{2})^{\b+1/2}(1-u\sin\tfrac{\t}{2}\sin\tfrac{\v}{2}-\cos\tfrac{\t}{2}\cos\tfrac{\v}{2})^{\a+aj+3/2}}\, du.
\end{multline*}
Now we make the change of variable $1+u=2w$, use \eqref{function h} with $\eta=1/2$ and $\gamma=\a+aj$, and apply Lemma \ref{lem:0} taking $c=\a+1/2, d=aj, \lambda=1/2, A=1+\sin\tfrac{\t}{2}\sin\tfrac{\v}{2}-\cos\tfrac{\t}{2}\cos\tfrac{\v}{2}$ and $B=2\sin\tfrac{\t}{2}\sin\tfrac{\v}{2}$. Therefore, for $j\ge1$, we get
\begin{multline*}
\Big|\frac{\Gamma(\a+aj+\b+bj+2)u_j(\t)u_j(\v)}{\Gamma(\a+aj+1/2)\Gamma(\b+bj+1/2)2^{\a+aj+\b+bj+2}}J_2\Big|\\
\le \frac{\Gamma(\a+aj+\b+bj+2)\Gamma(bj)\Gamma(\a+aj+3/2)\Gamma(aj)\Gamma(1/2)}{\Gamma(\a+aj+1/2)\Gamma(\b+bj+1/2)\Gamma(\a+aj+bj+3/2)\Gamma(aj+1/2)(\a+aj)^{1/2}}
 \frac{C}{\mu_{\a,\b}(B(\t,|\t-\v|))}.
\end{multline*}
The latter estimate and \eqref{gambas} give \eqref{eq:cresimiento}.

\noindent\textbf{Estimate for $J_3$ when $b\geq1$.}
Note that $\sin\tfrac{\t}{2}\cos\tfrac{\v}{2}\le \cos\tfrac{\v}{2}=\sin\tfrac{\pi-\v}{2}\sim \tfrac{\pi-\v}{2}\le \tfrac{\pi-\v+\pi-\t}{2}$.
We follow the same procedure as for $J_2$, but exchanging the role of $u$ and $v$ and the corresponding values for the parameters. Let us explain this. First, by applying Lemma \ref{lem:0} to the integral in $u$ with $c=\a, d=aj, \lambda=\b+bj+3/2, A=1-v\cos\tfrac{\t}{2}\cos\tfrac{\v}{2}$ and $B=\sin\tfrac{\t}{2}\sin\tfrac{\v}{2}$, $J_3$ is bounded by
\begin{multline}\label{follon}
\frac{\Gamma(aj)\Gamma(\b+bj+3/2)2^{\a+aj-1/2}}{\Gamma(aj+\b+bj+3/2)}\frac{\pi-\t+\pi-\v}{(\sin\tfrac{\t}{2}\sin\tfrac{\v}{2})^{aj}}\\
\times\int_{-1}^1\frac{(1-v)(1-v^2)^{\b+bj-1/2}}{(1-v\cos\tfrac{\t}{2}\cos\tfrac{\v}{2})^{\a+1/2}(1-v\cos\tfrac{\t}{2}\cos\tfrac{\v}{2}-\sin\tfrac{\t}{2}\sin\tfrac{\v}{2})^{\b+bj+3/2}}\, dv.
\end{multline}
Next, we make the change of variable $1+v=2w$, use \eqref{function h} with $\eta=1/2$ and $\gamma=\b+bj$,
and apply Lemma \ref{lem:0} taking $c=\b+1/2, d=bj, \lambda=1/2, A=1+\cos\tfrac{\t}{2}\cos\tfrac{\v}{2}-\sin\tfrac{\t}{2}\sin\tfrac{\v}{2}$ and $B=2\cos\tfrac{\t}{2}\cos\tfrac{\v}{2}$. We get the estimate, which is similar to that for $J_2$,
by exchanging the parameters $\a$ and $\b$.

\noindent\textbf{Estimate for $J_3$ when $b=0$.}
Here we proceed with the integral in $u$ as in the previous case, and then observe that the integral appearing in \eqref{follon}
is bounded by the sum of two terms $J_{3,-}$ and $J_{3,+}$, given by
\[
J_{3,\pm}:=\frac{C_{\b}}{(1-\cos\tfrac{\t}{2}\cos\tfrac{\v}{2})^{\a+1/2}}\int_0^1\frac{(1-v)^{\b\mp 1/2}}{(1\pm v\cos\tfrac{\t}{2}\cos\tfrac{\v}{2}-\sin\tfrac{\t}{2}\sin\tfrac{\v}{2})^{\b+3/2}}\, dv.
\]
For $J_{3,+}$, $(1-v)^{\b+1/2}\le (1-v)^{\b}$, and we apply Lemma \ref{lem:0} with $c=\b+1/2, d=0, \lambda=1/2, A=1-\sin\tfrac{\t}{2}\sin\tfrac{\v}{2}$ and $B=\cos\tfrac{\t}{2}\cos\tfrac{\v}{2}$.
Concerning $J_{3,-}$, using that $1+v\cos\frac{\t}{2}\cos\frac{\v}{2}\geq 1-\cos\frac{\t}{2}\cos\frac{\v}{2}$, since $\b-1/2>-1$,
\[
J_{3,-}\le \frac{C_\b}{(1-\cos\tfrac{\t}{2}\cos\tfrac{\v}{2})^{\a+1/2}(1-\sin\tfrac{\t}{2}\sin\tfrac{\v}{2})^{\b+1}(1-\sin\tfrac{\t}{2}\sin\tfrac{\v}{2}-\cos\tfrac{\t}{2}\cos\tfrac{\v}{2})^{1/2}}.
\]

By plugging the estimates obtained for $J_1, J_2$ and $J_3$, and taking into account \eqref{lem:medida bolas}, we get \eqref{eq:cresimiento}.

\noindent\textbf{Smoothness estimates: proof of \eqref{eq:suavidad}. Derivative in $\t$.}
We have
$$
\frac{\partial}{\partial\t}(u_j(\t)u_j(\v)\mathcal{K}^{\a+aj,\b+bj}(\t,\v))=:K_1+K_2,
$$
where, for $J_1, J_2$ and $J_3$ as above,
\begin{align*}
K_1&=\frac{\Gamma(\a+aj+\b+bj+2)u_j(\t)u_j(\v)}{\pi\Gamma(\a+aj+1/2)\Gamma(\b+bj+1/2)2^{\a+aj+\b+bj+2}}
\left(aj\frac{\cos\tfrac{\t}{2}}{\sin\tfrac{\t}{2}}-bj\frac{\sin\tfrac{\t}{2}}{\cos\tfrac{\t}{2}}\right)(J_1+J_2-J_3)\\
&=:\frac{\Gamma(\a+aj+\b+bj+2)u_j(\t)u_j(\v)}{\pi\Gamma(\a+aj+1/2)\Gamma(\b+bj+1/2)2^{\a+aj+\b+bj+2}}
\sum_{i=1}^6K_{1,i},
\end{align*}
with
\begin{align*}
K_{1,1}&:=aj\frac{\cos\tfrac{\t}{2}}{\sin\tfrac{\t}{2}}J_1, \quad K_{1,2}:=aj\frac{\cos\tfrac{\t}{2}}{\sin\tfrac{\t}{2}}J_2,\quad K_{1,3}:=aj\frac{\cos\tfrac{\t}{2}}{\sin\tfrac{\t}{2}}J_3,\\
K_{1,4}&:=-bj\frac{\sin\tfrac{\t}{2}}{\cos\tfrac{\t}{2}}J_1,\quad K_{1,5}:=-bj\frac{\sin\tfrac{\t}{2}}{\cos\tfrac{\t}{2}}J_2, \quad
K_{1,6}:=bj\frac{\sin\tfrac{\t}{2}}{\cos\tfrac{\t}{2}}J_3,
\end{align*}
and
$$
K_2=\frac{\Gamma(\a+aj+\b+bj+2)u_j(\t)u_j(\v)}{\pi\Gamma(\a+aj+1/2)\Gamma(\b+bj+1/2)2^{\a+aj+\b+bj+1}}
\frac{\partial}{\partial \t}\mathcal{K}^{\a+aj,\b+bj}(\t,\v).
$$

\noindent\textbf{Estimate for $K_1$.}
We estimate each term $K_{1,i}$, $i=1,\ldots,6$ separately. The treatment may be tedious and long, but 
it is systematic. The ideas are the same as in the growth estimates, so we only sketch the hints to follow the proofs.

Indeed, the estimate for $K_{1,1}$ follows by using first Lemma \ref{lem:0} to the integral against $v$ with $c=\b, d=bj, \lambda=\a+aj+3/2, A=1-u\sin\tfrac{\t}{2}\sin\tfrac{\v}{2}$ and $B=\cos\tfrac{\t}{2}\cos\tfrac{\v}{2}$. From here, we proceed with the change $1+u=2w$, use \eqref{function h} with $\eta=1/2$ and $\gamma=\a+aj$ and apply again Lemma \ref{lem:0} with $c=\a+1/2, d=aj-1, \lambda=3/2, A=1+\sin\tfrac{\t}{2}\sin\tfrac{\v}{2}-\cos\tfrac{\t}{2}\cos\tfrac{\v}{2}$ and $B=2\sin\tfrac{\t}{2}\sin\tfrac{\v}{2}$. We see that the case $K_{1,4}$ for $b\geq 1$ follows analogously as $K_{1,1}$, but exchanging the roles of $u$ and $v$, in the same way as it was done with $J_3$. Observe that if $b=0$, then $K_{1,4}=0$.

Similarly, for $K_{1,2}$ we use the same procedure as in $K_{1,1}$, but using \eqref{function h} with $\eta=1$ and
$\gamma=\a+aj$, and Lemma \ref{lem:0} with $\lambda=1$ in the second application of the lemma. Again, the case of $K_{1,6}$ for $b\geq1$ is analogous to $K_{1,2}$, but exchanging the role of the variables $u$ and $v$.

For $K_{1,5}$ and $b\geq1$ we can follow parallel arguments to the ones for $K_{1,3}$ in the case $b\ge1$
below, by interchanging the roles of $u$ and $v$.

As for $K_{1,3}$, we distinguish the two cases.

\noindent\textbf{Estimate for $K_{1,3}$ when $b\ge1$.} For the $K_{1,3}$ term, we use Lemma \ref{lem:0} in the integral against $u$ with $c=\a, d=aj, \lambda=\b+bj+3/2, A=1-v\cos\tfrac{\t}{2}\cos\tfrac{\v}{2}$ and $B=\sin\tfrac{\t}{2}\sin\tfrac{\v}{2}$. Then, we make the change of variable $1+v=2w$ in the integral against $v$, and use \eqref{function h} with $\eta=1$ and $\gamma=\b+bj$. Finally, we apply again Lemma \ref{lem:0} taking $c=\b, d=bj, \lambda=1, A=1+\cos\tfrac{\t}{2}\cos\tfrac{\v}{2}-\sin\tfrac{\t}{2}\sin\tfrac{\v}{2}$ and $B=2\cos\tfrac{\t}{2}\cos\tfrac{\v}{2}$.

\noindent\textbf{Estimate for $K_{1,3}$ when $b=0$.}
First, notice that $(1-u^2)^{\a+aj-1/2}\le(1-u^2)^{\a+aj-3/2}$ for $u\in (-1,1)$. Then, we use Lemma \ref{lem:0} in the integral against $u$ with $c=\a, d=aj-1, \lambda=\b+5/2, A=1-v\cos\tfrac{\t}{2}\cos\tfrac{\v}{2}$ and $B=\sin\tfrac{\t}{2}\sin\tfrac{\v}{2}$. After that, the integral arising in $v$ is
\begin{equation}\label{int}
\int_{-1}^1\frac{(1-v)(1-v^2)^{\b-1/2}}{(1-v\cos\tfrac{\t}{2}\cos\tfrac{\v}{2})^{\a+1/2}(1-v\cos\tfrac{\t}{2}\cos\tfrac{\v}{2}-\sin\tfrac{\t}{2}\sin\tfrac{\v}{2})^{\b+5/2}}\, dv.
\end{equation}
Proceeding as in the case of $J_3$ above, this integral is bounded by the sum of two terms $K_{1,3,+}$ and $K_{1,3,-}$, given by
\[
K_{1,3,\pm}:=\frac{C_{\b}}{(1-\cos\tfrac{\t}{2}\cos\tfrac{\v}{2})^{\a+1/2}}\int_0^1\frac{(1-v)^{\b\pm1/2}}{(1\mp
v\cos\tfrac{\t}{2}\cos\tfrac{\v}{2}-\sin\tfrac{\t}{2}\sin\tfrac{\v}{2})^{\b+5/2}}\, dv.
\]
For $K_{1,3,+}$, we apply Lemma \ref{lem:0} with $c=\b+1, d=0, \lambda=1, A=1-\sin\tfrac{\t}{2}\sin\tfrac{\v}{2}$ and $B=\cos\tfrac{\t}{2}\cos\tfrac{\v}{2}$.
Concerning $K_{1,3,-}$, by using that $1+v\cos\frac{\t}{2}\cos\frac{\v}{2}\geq 1-\cos\frac{\t}{2}\cos\frac{\v}{2}$, since $\b-1/2>-1$,
\[
K_{1,3,-}\le \frac{C_\b}{(1-\cos\tfrac{\t}{2}\cos\tfrac{\v}{2})^{\a+1/2}(1-\sin\tfrac{\t}{2}\sin\tfrac{\v}{2})^{\b+3/2}(1-\sin\tfrac{\t}{2}\sin\tfrac{\v}{2}-\cos\tfrac{\t}{2}\cos\tfrac{\v}{2})}.
\]

\noindent\textbf{Estimate for $K_2$.} Observe that $\partial_{\t}^2(1-z)=z/4$. Then,
\begin{align*}
\left|\frac{\partial}{\partial \t}\left(\frac{\partial_{\t}(1-z)}{(1-z)^{\a+aj+\b+bj+2}}\right)\right|&=\left|\frac{\partial_{\t}^2(1-z)}{(1-z)^{\a+aj+\b+bj+2}}-\frac{(\a+aj+\b+bj+2)(\partial_{\t}(1-z))^2}{(1-z)^{\a+aj+\b+bj+3}}\right|.
\end{align*}
From here and \eqref{kernel}, we have that
\begin{multline*}
|K_2|\le\frac{\Gamma(\a+aj+\b+bj+2)u_j(\t)u_j(\v)}{\pi\Gamma(\a+aj+1/2)\Gamma(\b+bj+1/2)2^{\a+aj+\b+bj+2}}\\
\times\int_{-1}^1\int_{-1}^1(1-u^2)^{\a+aj-1/2}(1-v^2)^{\b+bj-1/2}\sum_{i=1}^7|K_{2,i}|\,du\,dv,
\end{multline*}
where
\begin{align*}
K_{2,1}&:=\frac{z}{4(1-z)^{\a+aj+\b+bj+2}},\qquad K_{2,2}:=\frac{(\a+aj+\b+bj+2)}{(1-z)^{\a+aj+\b+bj+3}}\Big(\sin\tfrac{\t-\v}{2}\Big)^2\\
K_{2,3}&:=\frac{(\a+aj+\b+bj+2)}{(1-z)^{\a+aj+\b+bj+3}}(1-u)^2\Big(\cos\tfrac{\t}{2}\sin\tfrac{\v}{2}\Big)^2\\
K_{2,4}&:=\frac{(\a+aj+\b+bj+2)}{(1-z)^{\a+aj+\b+bj+3}}(1-v)^2\Big(\sin\tfrac{\t}{2}\cos\tfrac{\v}{2}\Big)^2\\
K_{2,5}&:=\frac{(\a+aj+\b+bj+2)}{(1-z)^{\a+aj+\b+bj+3}}(1-u)\sin\tfrac{\t-\v}{2}\cos\tfrac{\t}{2}\sin\tfrac{\v}{2}\\
K_{2,6}&:=\frac{(\a+aj+\b+bj+2)}{(1-z)^{\a+aj+\b+bj+3}}(1-v)\sin\tfrac{\t-\v}{2}\sin\tfrac{\t}{2}\cos\tfrac{\v}{2}\\
K_{2,7}&:=\frac{(\a+aj+\b+bj+2)}{(1-z)^{\a+aj+\b+bj+3}}(1-u)(1-v)\sin\tfrac{\t}{2}\cos\tfrac{\t}{2}\sin\tfrac{\v}{2}\cos\tfrac{\v}{2}.
\end{align*}
We sketch the proof for each integral containing the term $K_{2,i}$, $i=1,\ldots,7$.

For the integral with $K_{2,1}$ we observe that $z<1$ and the estimate follows in the same way as for $J_1$ above.

Concerning the integral with $K_{2,2}$, the proof is analogous to the one for $J_2$, but we take $\lambda=\b+bj+5/2$ in the first application of Lemma \ref{lem:0} and $\lambda=2$ in the second application.

We pass to $K_{2,3}$. We apply Lemma \ref{lem:0} in the integral in $v$ with $c=\b, d=bj, \lambda=\a+aj+5/2, A=1-u\sin\tfrac{\t}{2}\sin\tfrac{\v}{2}$ and $B=\cos\tfrac{\t}{2}\cos\tfrac{\v}{2}$, and  we make the change of variable $1+u=2w$ in the integral in $u$. Then, we use \eqref{function h} with $\eta=1$ and $\gamma=\a+aj$ and apply again Lemma \ref{lem:0} taking $c=\a+1, d=aj, \lambda=1, A=1+\sin\tfrac{\t}{2}\sin\tfrac{\v}{2}-\cos\tfrac{\t}{2}\cos\tfrac{\v}{2}$ and $B=2\sin\tfrac{\t}{2}\sin\tfrac{\v}{2}$.

The treatment of $K_{2,4}$ with $b\geq1$ is identical to $K_{2,3}$, but exchanging the roles of $u$ and $v$.

Next we estimate $K_{2,4}$ when $b=0$.
We use Lemma \ref{lem:0} in the integral against $u$ with $c=\a, d=aj, \lambda=\b+5/2, A=1-v\cos\tfrac{\t}{2}\cos\tfrac{\v}{2}$ and $B=\sin\tfrac{\t}{2}\sin\tfrac{\v}{2}$. Observe that the integral arising in $v$ is
the same as in \eqref{int} but with $(1-v)^2$ in place of $(1-v)$ in the numerator. This integral is then bounded by $K_{1,3,1}$ and $K_{1,3,2}$ above.

The proof of the estimate involving $K_{2,5}$ is also analogous to the one for $J_2$, but taking $\lambda=\a+aj+5/2$ in the first application of Lemma \ref{lem:0} and $\lambda=3/2$ the second time.

Concerning the integral involving $K_{2,6}$, the case $b\ge1$ works in the same way as $K_{2,5}$, but exchanging the roles of $u$ and $v$. For the case $b=0$ one uses the procedure given for $K_{2,4}$, but for the corresponding integral $K_{1,3,2}$, one needs the estimate
\[
\int_0^1\frac{(1-v)^{\b-1/2}}{(1+v\cos\tfrac{\t}{2}\cos\tfrac{\v}{2}-\sin\tfrac{\t}{2}\sin\tfrac{\v}{2})^{\b+5/2}}\, dv\le \frac{C_\b}{(1-\sin\tfrac{\t}{2}\sin\tfrac{\v}{2})^{\b+1}(1-\sin\tfrac{\t}{2}\sin\tfrac{\v}{2}-\cos\tfrac{\t}{2}\cos\tfrac{\v}{2})^{3/2}}.
\]

For the integral term with $K_{2,7}$,
 we first make the change $1+u=2w$ and use \eqref{function h} with $\eta=1/2$ and $\gamma=\a+aj$. Then, apply Lemma \ref{lem:0} taking $c=\a+1/2, d=aj, \lambda=\b+bj+2, A=1+\sin\tfrac{\t}{2}\sin\tfrac{\v}{2}-v\cos\tfrac{\t}{2}\cos\tfrac{\v}{2}$ and $B=2\sin\tfrac{\t}{2}\sin\tfrac{\v}{2}$. Later on,
 we distinguish two cases. If $b\geq1$ we proceed in the same way with the integral in $v$, that is, first make the change
 $1+v=2w$, then use \eqref{function h} and finally apply Lemma \ref{lem:0} with
  $c=\b+1/2, d=bj, \lambda=1, A=1+\cos\tfrac{\t}{2}\cos\tfrac{\v}{2}-\sin\tfrac{\t}{2}\sin\tfrac{\v}{2}$ and $B=2\cos\tfrac{\t}{2}\cos\tfrac{\v}{2}$.
If $b=0$, the integral arising in $v$ is bounded by the sum of the following two integrals:
\[
K_{2,7,\pm}:=\frac{C_{\b}}{(1-\cos\tfrac{\t}{2}\cos\tfrac{\v}{2})^{\a+1}}\int_0^1\frac{(1-v)^{\b\pm1/2}}{(1\mp v\cos\tfrac{\t}{2}\cos\tfrac{\v}{2}-\sin\tfrac{\t}{2}\sin\tfrac{\v}{2})^{\b+2}}\, dv.
\]
For the integral $K_{2,7,+}$, observe that $(1-v)^{\b+1/2}\le(1-v)^{\b}$, and we apply Lemma \ref{lem:0} with $c=\b+1/2, d=0, \lambda=1, A=1-\sin\tfrac{\t}{2}\sin\tfrac{\v}{2}$ and $B=\cos\tfrac{\t}{2}\cos\tfrac{\v}{2}$.
For $K_{2,7,-}$ we have
\[
K_{2,7,-}\le \frac{C_\b}{(1-\cos\tfrac{\t}{2}\cos\tfrac{\v}{2})^{\a+1}(1-\sin\tfrac{\t}{2}\sin\tfrac{\v}{2})^{\b+1}(1-\sin\tfrac{\t}{2}\sin\tfrac{\v}{2}-\cos\tfrac{\t}{2}\cos\tfrac{\v}{2})}.
\]

By pasting together all the estimates and taking into account \eqref{lem:medida bolas}, we get \eqref{eq:suavidad}.

\noindent\textbf{Smoothness estimates: proof of \eqref{eq:suavidad}. Derivative in $\v$.}
We have
$$
\frac{\partial}{\partial\v}(u_j(\t)u_j(\v)\mathcal{K}^{\a+aj,\b+bj}(\t,\v))=:L_1+L_2,
$$
where
\begin{equation*}
L_1=\frac{\Gamma(\a+aj+\b+bj+2)u_j(\t)u_j(\v)}{\Gamma(\a+aj+1/2)\Gamma(\b+bj+1/2)2^{\a+aj+\b+bj+2}}
\left(aj\frac{\cos\tfrac{\v}{2}}{\sin\tfrac{\v}{2}}-bj\frac{\sin\tfrac{\v}{2}}{\cos\tfrac{\v}{2}}\right)(J_1+J_2-J_3)
\end{equation*}
with $J_1, J_2$ and $J_3$ as above, and
$$
L_2=\frac{\Gamma(\a+aj+\b+bj+2)u_j(\t)u_j(\v)}{\Gamma(\a+aj+1/2)\Gamma(\b+bj+1/2)2^{\a+aj+\b+bj+2}}
\frac{\partial}{\partial \v}\left[\mathcal{K}^{\a+aj,\b+bj}(\t,\v)\right].
$$
In order to treat $L_1$, we use the symmetry of the expressions on $u$ and $v$ and $\t$ and $\v$, and the estimate \eqref{eq:suavidad} follows with analogous reasonings as for $K_1$ above.
Concerning $L_2$, we have to take into account that $|\partial^2_{\v\t}(1-z)|=\tfrac14\big|u\cos\tfrac{\t}{2}\cos\tfrac{\v}{2}+v\sin\tfrac{\t}{2}\sin\tfrac{\v}{2}\big|\le 1$ and, on the other hand, that $\partial_{\v}(1-z)=\tfrac12\sin\tfrac{\t-\v}{2}-\tfrac{1-u}{2}\sin\tfrac{\t}{2}\cos\tfrac{\v}{2}+\tfrac{1-v}{2}\cos\tfrac{\t}{2}\sin\tfrac{\v}{2}$. Then, with the same ideas as in $K_2$, the proof of the estimate \eqref{eq:suavidad} follows. We leave details to the interested reader.
\end{proof}

\subsection{Kernel estimates for $\mathcal{T}^{\a,\b}_M$}\label{Sec:angular}

Let us define the operator $\mathcal{T}_M^{\a,\b}$ in $L^2(d\mu_{\a,\b})$ as in
Subsection \ref{operador auxiliar}. By using the Poisson semigroup $\mathcal{P}_t$ in \eqref{serie}
we can see that
$$
\mathcal{T}_M^{\a,\b}f(\t)=\sqrt{\rho_M(\t)}(\mathcal{J}^{\a,\b})^{-1/2}f(\t)=
\int_0^{\pi}T_M^{\a,\b}(\t,\v)f(\v)\,d\mu_{\a,\b}(\v),\quad\hbox{in}~L^2(d\mu_{\a,\b}),
$$
where
\begin{equation}
\label{kernel T}
T_M^{\a,\b}(\t,\v)=\sqrt{\rho_M(\t)}\int_0^{\infty}\mathcal{P}_t^{\a,\b}(\t,\v)\,dt.
\end{equation}

We will prove in this subsection that $ju_j\mathcal{T}_M^{\a+aj,\b+bj}$ can be seen as 
a Calder\'on--Zygmund operator with the
kernel $ju_j(\t)u_j(\v)T_M^{\a+aj,\b+bj}(\t,\v)$, and we will deliver the proof of Theorem \ref{Angular Riesz kernel}.

\begin{lem}
Let $\a,\b>-1/2$, $a\ge1$, $b=0$ or $b\geq1$, and let $u_j$ be as in \eqref{ujs}, $j\in\mathbb{N}$.
When $M=\S^d$ we also assume that $\b>0$ and $b\geq1$.
 Then, the operator $ju_j\mathcal{T}_M^{\a+aj,\b+bj}(u_j^{-1}\cdot)$ is bounded from $L^2(d\mu_{\a,\b})$ into itself.
\end{lem}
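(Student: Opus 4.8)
The plan is to reduce the uniform $L^{2}$ bound to a Hardy-type inequality for $\J^{A,B}$ with a constant of order $j^{-2}$. First, since $u_{j}(\t)^{2}=(\sin\tfrac{\t}{2})^{2aj}(\cos\tfrac{\t}{2})^{2bj}$, one has $d\mu_{\a+aj,\b+bj}(\t)=u_{j}(\t)^{2}\,d\mu_{\a,\b}(\t)$, so that $f\mapsto u_{j}^{-1}f$ is an isometry of $L^{2}(d\mu_{\a,\b})$ onto $L^{2}(d\mu_{\a+aj,\b+bj})$ with inverse $g\mapsto u_{j}g$. Hence $u_{j}\mathcal{T}_{M}^{\a+aj,\b+bj}(u_{j}^{-1}\cdot)$ is unitarily equivalent to $\mathcal{T}_{M}^{A,B}$ acting on $L^{2}(d\mu_{A,B})$, where $A=\a+aj$ and $B=\b+bj$, and it suffices to show that $\|\mathcal{T}_{M}^{A,B}\|_{L^{2}(d\mu_{A,B})\to L^{2}(d\mu_{A,B})}\le C/j$ with $C$ independent of $j$.

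Put $h=(\J^{A,B})^{-1/2}f$. Since $\J^{A,B}$ is nonnegative self-adjoint on $L^{2}(d\mu_{A,B})$ with bottom eigenvalue $\lambda_{0}^{A,B}=\big(\tfrac{A+B+1}{2}\big)^{2}>0$, the operator $(\J^{A,B})^{-1/2}$ is bounded, $h$ lies in the form domain of $\J^{A,B}$, and $\langle\J^{A,B}h,h\rangle_{L^{2}(d\mu_{A,B})}=\|f\|_{L^{2}(d\mu_{A,B})}^{2}$. Because $\mathcal{T}_{M}^{A,B}f=\sqrt{\rho_{M}}\,h$, with $\rho_{M}=(\sin\tfrac{\t}{2})^{-2}$ for every $M\neq\S^{d}$ and $\rho_{\S^{d}}=\tfrac14(\sin\tfrac{\t}{2})^{-2}(\cos\tfrac{\t}{2})^{-2}$, the bound above is equivalent to the Hardy inequality
\[
\int_{0}^{\pi}\frac{|h(\t)|^{2}}{(\sin\tfrac{\t}{2})^{2}}\,d\mu_{A,B}(\t)\le\frac{C}{j^{2}}\,\langle\J^{A,B}h,h\rangle_{L^{2}(d\mu_{A,B})}
\]
(and, for $M=\S^{d}$, the same with $(\sin\tfrac{\t}{2})^{-2}(\cos\tfrac{\t}{2})^{-2}$ in the integrand), to be proved for $h$ in the form domain; by density it suffices to prove it for finite linear combinations of the $\P_{n}^{(A,B)}$, for which the integrations by parts below are legitimate.

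To obtain the sharp constant we use the ground-state substitution. Write $\J^{A,B}=-\varpi^{-1}\tfrac{d}{d\t}\big(\varpi\tfrac{d}{d\t}\cdot\big)+\lambda_{0}^{A,B}$, with $\varpi(\t)=(\sin\tfrac{\t}{2})^{2A+1}(\cos\tfrac{\t}{2})^{2B+1}$ the density of $d\mu_{A,B}$, so that $\langle\J^{A,B}h,h\rangle=\int_{0}^{\pi}|h'|^{2}\varpi\,d\t+\lambda_{0}^{A,B}\int_{0}^{\pi}|h|^{2}\varpi\,d\t$. For $v(\t)=(\sin\tfrac{\t}{2})^{-A}$ one checks the pointwise identity
\[
\frac{(\J^{A,B}-\lambda_{0}^{A,B})v}{v}=\frac{A^{2}}{4(\sin\tfrac{\t}{2})^{2}}-\frac{A(A+2B+2)}{4}.
\]
Writing $h=vk$ and integrating by parts the cross term in $\int|h'|^{2}\varpi=\int(v')^{2}|k|^{2}\varpi+\int vv'(|k|^{2})'\varpi+\int v^{2}|k'|^{2}\varpi$ yields
\[
\int_{0}^{\pi}|h'|^{2}\varpi\,d\t\ge\int_{0}^{\pi}\frac{(\J^{A,B}-\lambda_{0}^{A,B})v}{v}\,|h|^{2}\varpi\,d\t,
\]
the boundary terms vanishing since $A>0$ and $B>-1$. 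Adding $\lambda_{0}^{A,B}\int|h|^{2}\varpi$ to both sides and using $A(A+2B+2)\le(A+B+1)^{2}=4\lambda_{0}^{A,B}$, we get $\langle\J^{A,B}h,h\rangle\ge\tfrac{A^{2}}{4}\int_{0}^{\pi}(\sin\tfrac{\t}{2})^{-2}|h|^{2}\,d\mu_{A,B}$, i.e. the Hardy inequality with constant $4/A^{2}$. For $M=\S^{d}$ the same argument with $v(\t)=(\cos\tfrac{\t}{2})^{-B}$ gives the companion inequality with $(\cos\tfrac{\t}{2})^{-2}$ and constant $4/B^{2}$, and the two combine through $(\sin\tfrac{\t}{2})^{-2}(\cos\tfrac{\t}{2})^{-2}=(\sin\tfrac{\t}{2})^{-2}+(\cos\tfrac{\t}{2})^{-2}$.

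Finally, $a\ge1$ forces $A=\a+aj\ge\a+j\ge j/2$ for $j\ge1$, so $4/A^{2}=O(j^{-2})$; and when $M=\S^{d}$ the extra hypotheses $b\ge1$ and $\b>0$ give $B=\b+bj\ge j/2$ as well, so $4/A^{2}+4/B^{2}=O(j^{-2})$. Thus the Hardy constants have the required order, and the reduction of the first paragraph finishes the proof. The main obstacle is the ground-state computation together with the choice of $v$ that makes the Hardy constant scale like $j^{-2}$; the reductions and the vanishing of the boundary terms are routine bookkeeping.
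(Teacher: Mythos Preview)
Your proof is correct and takes a genuinely different route from the paper. The paper argues on the spectral side: it uses the contiguous relation
\[
\frac{\alpha}{\sin\tfrac{\t}{2}}\,\P_{n}^{(\a,\b)}(\t)=\frac{n+\a}{A_n}\,\frac{1}{\sin\tfrac{\t}{2}}\,\P_{n}^{(\a-1,\b)}(\t)+\frac{n+\b}{B_n}\,\sin\tfrac{\t}{2}\,\P_{n-1}^{(\a+1,\b)}(\t),
\]
so that, after inserting $(\sin\tfrac{\t}{2})^{\pm2}$ into the measure, both terms are orthogonal expansions in $L^2(d\mu_{\a-1,\b})$ and $L^2(d\mu_{\a+1,\b})$ with coefficients bounded uniformly in $\a,\b$, and Parseval finishes. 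For $M=\S^d$ they split $(0,\pi)$ at $\pi/2$ and use the $\t\mapsto\pi-\t$ symmetry to reduce to the same computation with the roles of $\a,\b$ exchanged.

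Your argument, by contrast, is operator-theoretic: the unitary conjugation by $u_j$ reduces the question to a Hardy-type lower bound $\langle\J^{A,B}h,h\rangle\ge\tfrac{A^{2}}{4}\int(\sin\tfrac{\t}{2})^{-2}|h|^{2}\,d\mu_{A,B}$, which you obtain by the ground-state substitution $h=(\sin\tfrac{\t}{2})^{-A}k$. The computation of $(\J^{A,B}-\lambda_0)v/v$ is right, the leftover constant $\lambda_0-\tfrac14 A(A+2B+2)=\tfrac14(B+1)^{2}\ge0$ is exactly what makes the inequality sharp, the boundary term $vv'\varpi\,k^{2}=-\tfrac{A}{2}(\cos\tfrac{\t}{2})^{2B+2}|h|^{2}(\sin\tfrac{\t}{2})^{2A}$ does vanish at both endpoints for trigonometric polynomials $h$ since $A>0$ and $2B+2>0$, and the trigonometric identity $(\sin\tfrac{\t}{2})^{-2}(\cos\tfrac{\t}{2})^{-2}=(\sin\tfrac{\t}{2})^{-2}+(\cos\tfrac{\t}{2})^{-2}$ handles the sphere neatly. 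Your approach is arguably cleaner---it never touches a Jacobi recurrence and yields the explicit constant $4/A^{2}$---while the paper's approach stays closer to the orthogonal-polynomial machinery used elsewhere in the article.
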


\begin{proof}
Before starting with the estimate, we need an identity for the Jacobi polynomials. By using the identities
\[
\left(n+\frac{\a}{2}+\frac{\b}{2}\right)(1-x)P_{n-1}^{(\a+1,\b)}(x)
=(n+\a)P_{n-1}^{(\a,\b)}(x)-nP_{n}^{(\a,\b)}(x),
\]
see \cite[22.7.15]{Abra} with $n$ replaced by $n-1$, and
\[
P_{n-1}^{(\a,\b)}(x)=\frac{1}{n+\b}\left((n+\a+\b)P_{n}^{(\a,\b)}(x)-(2n+\a+\b)
P_n^{(\a-1,\b)}(x)\right),
\]
which follows from \cite[22.7.18]{Abra}, we have
\begin{equation}
\label{eq:Jacobi-Id}
\a P_{n}^{(\a,\b)}(x)=(n+\a)P_{n}^{(\a-1,\b)}(x)+\frac{n+\b}{2}(1-x)
P_{n-1}^{(\a+1,\b)}(x).
\end{equation}
With the relations $d_{n}^{\a-1,\b}=A_n d_{n}^{\a,\b}$ and $d_{n-1}^{\a+1,\b}=B_n d_n^{\a,\b}$, where $d_{n}^{\a,\b}$ is as in \eqref{que} and
\[
A_n^2=\frac{2n+\a+\b}{2n+\a+\b+1}\frac{n+\a}{n+\a+\b}, \quad B_n^2=\frac{2n+\a+\b}{2n+\a+\b+1}\frac{n+\b}{n},
\]
and the substitution $x=\cos \theta$, we conclude from \eqref{eq:Jacobi-Id} that
\begin{equation}
\label{eq:L2-1}
\frac{\alpha}{\sin \frac{\theta}{2}}\mathcal{P}_{n}^{(\a,\b)}(\t)
=\frac{n+\a}{A_n}\frac{1}{\sin\frac{\theta}{2}}
\mathcal{P}_{n}^{(\a-1,\b)}(\theta)+
\frac{n+\b}{B_n}\sin \tfrac{\theta}{2} \mathcal{P}_{n-1}^{(\a+1,\b)}(\theta).
\end{equation}

Let us start with the estimate in the case $M\neq\S^d$. The required $L^2$ estimate can be deduced from the inequality
\begin{equation}
\label{eq:L2-2}
\int_0^\pi |\a \mathcal{T}_M^{\a,\b} f|^2\, d\mu_{\a,\b}\le C \int_0^\pi |f|^2\, d\mu_{\a,\b},
\end{equation}
with $C$ a constant independent of $\a$ and $\b$.
By \eqref{eq:L2-1}, the left-hand side in \eqref{eq:L2-2} is bounded by the sum of
\[
\int_0^{\pi/2}\left|\sum_{n=0}^\infty \frac{n+\a}{\big(n+\frac{\a+\b+1}{2}\big)A_n}c_n^{\a,\b}(f)\mathcal{P}_{n}^{(\a-1,\b)}\right|^2\, d\mu_{\a-1,\b}
\]
and
\[
\int_0^{\pi/2}\left|\sum_{n=0}^\infty \frac{n+\b}{\big(n+\frac{\a+\b+1}{2}\big)B_n}c_n^{\a,\b}(f)\mathcal{P}_{n-1}^{(\a+1,\b)}\right|^2\, d\mu_{\a+1,\b}.
\]
Now, by taking into account that the sequences $\frac{n+\a}{\big(n+\frac{\a+\b+1}{2}\big)A_n}$ and $\frac{n+\b}{\big(n+\frac{\a+\b+1}{2}\big)B_n}$ are bounded by a constant independent of $\a$ and $\b$ and the orthogonality, for $\a>0$, of the systems $\{\mathcal{P}_n^{(\a-1,\b)}\}_{n\geq0}$ and $\{\mathcal{P}_n^{(\a+1,\b)}\}_{n\geq0}$, both summands are controlled by
$\sum_{n=0}^\infty |c_n^{\a,\b}(f)|^2=\int_0^\pi|f|^2\,d\mu_{\a,\b}$.

For the case $M=\S^d$, we split $[0,\pi]$ into the intervals $[0,\pi/2]$, $[\pi/2,\pi]$. In the first interval we have $\sin \theta \sim \sin \frac{\theta}{2}$ and in the second one $\sin \theta \sim \cos \frac{\theta}{2}$. Then, with the change of variable $\theta=\pi-w$ in the interval $[\pi/2,\pi]$, we can easily check that
\begin{multline*}
\int_0^{\pi}|ju_j\mathcal{T}_{\S^d}^{\a+aj,\b+bj}(u_j^{-1}f)|^2\, d\mu_{\a,\b}\\
\sim \int_0^{\pi/2}|ju_j\mathcal{T}_{\S^d}^{\a+aj,\b+bj}(u_j^{-1}f)|^2\, d\mu_{\a,\b}+
\int_0^{\pi/2}|jv_j\mathcal{T}_{\S^d}^{\b+bj,\a+aj}(v_j^{-1}g)|^2\, d\mu_{\b,\a}
\end{multline*}
with $g(w)=f(\pi-w)$, $v_j=\big(\cos\frac{\theta}{2}\big)^{aj}\big(\sin\frac{\theta}{2}\big)^{bj}$. Now, the operators
appearing in the right hand side above 
contain the factor $1/\sin \frac{\theta}{2}$ instead of $1/\sin \theta$. So, enlarging the intervals of integration, the boundedness of both integrals is reduced to the previous case.
\end{proof}

Proceeding as in Lemma \ref{CZ kernel associated}, it is easily seen that 
$ju_j\mathcal{T}_M^{\a+aj,\b+bj}(u_j^{-1}f)$ are Calder\'on--Zygmund operators with kernel
$ju_j(\t)u_j(\v)T_M^{\a+aj,\b+bj}(\t,\v)$.

\begin{proof}[Proof of Theorem \ref{Angular Riesz kernel}]
We only need to consider the case $\sqrt{\rho_M(\t)}=\frac{1}{\sin\frac{\t}{2}}$, that is, $M\neq \S^d$.
If $M=\S^d$ we observe that $(\sqrt{\rho_{\S^d}(\t)})^{-1}=\sin\t=2\sin\tfrac{\t}{2}\cos\tfrac{\t}{2}$,  which introduces a singularity in $\t=0$ and $\t=\pi$. In fact, when $\t\in (0,\pi/2)$, $(\sqrt{\rho_{\S^d}(\t)})^{-1}$ behaves as ${\sin\tfrac{\t}{2}}$, and the operator $\mathcal{T}_{\S^d}^{\a,\b}$ is treated exactly in the same way as $\mathcal{T}_M^{\a,\b}f$, $M\neq\S^d$. If $\t\in (\pi/2,\pi)$, $(\sqrt{\rho_{\S^d}(\t)})^{-1}$
behaves as ${\cos\tfrac{\t}{2}}$, and the proofs are easily adapted in order to cancel this singularity.

Then, taking into account the expression for the kernel \eqref{kernel T} and the identity
\eqref{springer}, we have
\begin{multline*}
ju_j(\t)u_j(\v)T_M^{\a+aj,\b+bj}(\t,\v) =\frac{ju_j(\t)u_j(\v)\Gamma(\a+aj+\b+bj+1)}{\pi\Gamma(\a+aj+1/2)\Gamma(\b+bj+1/2)2^{\a+aj+\b+bj+2}\sin\tfrac{\t}{2}}\\
\times\int_{-1}^1\int_{-1}^1\frac{(1-u^2)^{\a+aj-1/2}(1-v^2)^{\b+bj-1/2}}{(1-z)^{\a+aj+\b+bj+1}}\,du\,dv.
\end{multline*}
The ideas to get the estimates \eqref{eq:cresimiento angular} and \eqref{eq:suavidad angular} are
completely analogous to the ones used for the Jacobi--Riesz transform estimates in the proof of Theorem \ref{Thm:Jacobi-Riesz kernel}. In this way, we apply Lemma \ref{lem:0} and \eqref{function h} with suitable choices in every single case that arises. Further details are omitted.
\end{proof}



\end{document}